\newcommand{\h}[0]{\mathpzc{h}}
\DeclareFontFamily{OT1}{pzc}{}
\DeclareFontShape{OT1}{pzc}{m}{it}{<-> s * [1.10] pzcmi7t}{}
\DeclareMathAlphabet{\mathpzc}{OT1}{pzc}{m}{it}
\numberwithin{equation}{section}
\newcommand{\vertiii}[1]{{\left\vert\kern-0.25ex\left\vert\kern-0.25ex\left\vert #1 
    \right\vert\kern-0.25ex\right\vert\kern-0.25ex\right\vert}}
\newcommand{\Bf}[0]{\mathcal{B}}
\newcommand{\Lf}[0]{\mathcal{L}}
\newcommand{\Ch}[0]{\mathcal{C}_h}
\newcommand{\Eh}[0]{\mathcal{E}_h}
\newcommand{\Gh}[0]{\mathcal{G}_h}
\newcommand{\osc}[0]{\mathrm{osc}}
\numberwithin{equation}{section}
 \newtheorem{theorem}{Theorem}[section]
 \newtheorem{lemma}[theorem]{Lemma}
\theoremstyle{definition}
\newtheorem{remark}[theorem]{Remark}
\begin{document}

\title{Nitsche's method for unilateral contact problems}
\author{Tom Gustafsson\thanks{Funding from Tekes (Decision number 3305/31/2015) and the Finnish Cultural Foundation is gratefully acknowledged.}, Rolf Stenberg and Juha Videman\thanks{Work supported by the Portuguese Science Foundation (FCOMP-01-0124-FEDER-029408) and the Finnish Academy of Science and Letters.}}

\maketitle

\begin{abstract}
We derive optimal a priori and a posteriori error estimates for Nitsche's method applied to unilateral contact problems. Our analysis is based on the interpretation of Nitsche's method as a stabilised finite element method for the mixed Lagrange multiplier  formulation of the contact problem wherein the Lagrange multiplier has been eliminated elementwise. To simplify the presentation, we focus on the scalar Signorini problem and outline only the proofs of the main results since most of the auxiliary results can be traced to our previous works on the numerical approximation of variational inequalities. We end the paper by presenting results of our numerical computations which corroborate the efficiency and reliability of the a posteriori estimators.
\end{abstract}





\section{Introduction} Unilateral contact problems  are of great engineering interest and they occur in numerous areas of physics and mechanics; cf. \cite{KS80}.  In mathematical terms, these problems are expressed as  variational inequalities and most often approximated by the finite element method; cf.  \cite{HHN96} and all the references therein.  

One of the most common examples of unilateral contact problems is contact  between two deformable elastic bodies.
Here, we will consider the Signorini problem which consists in finding the equilibrium position of an elastic body resting on a rigid frictionless surface.
For simplicity, we consider the scalar version of such problem, at times referred to as the Poisson-Signorini problem. However, our results carry over, with minor modifications, to the Signorini problem in linear elasticity. 

The finite element treatment of the Signorini problem  has shown to be more difficult than that of the obstacle problem (another archetypical variational inequality) due to the Signorini (no-penetration) condition at the boundary, and has led to a number of  papers over the  past years; cf. \cite{SV77,BHR78,HL80,BB2000,BB03,BBR03,SCH11,HR12,DH15} and the review papers \cite{W11,Frenchsurvey}. The above mentioned works address  primal and mixed formulations and focus on obtaining optimal a priori estimates  based on Falk's Lemma  \cite{F74}. As for the a posteriori error estimates for the Signorini problem, we refer to  \cite{BS2000,HN05,HN07,WW09, SCH09,KVW15}. 

Another approach that, at the same time, imposes the contact boundary conditions weakly, avoids the additional Lagrange multiplier of the mixed formulations and is consistent in contrast to the standard penalty formulations, is the Nitsche's formulation, first proposed for the unilateral contact problems by Chouly and Hild  \cite{CH13}, see also \cite{CHR15, BHL17} for further generalisations.
In \cite{CH13,CHR15, BHL17}, optimal a priori estimates were derived but, to our knowledge, the only existing work on the a posteriori error estimation of unilateral contact problems approximated by Nitsche's method is the recent work by Chouly et al. \cite{CFHPR18}.

In this paper, we will continue to advocate (cf.  \cite{GSV17,GSV17a, GSV17b,GSV18,GSV18b}) that  Nitsche's method is most readily analysed as a stabilised finite element method,
the relation which was first suggested  in \cite{Rolf95}.  We will prove optimal a priori estimates for the stabilised mixed formulation and show the reliability and the efficiency for the a posteriori error estimators without additional saturation assumptions which are needed when  Nitsche's method is analysed directly (cf.  \cite{CFHPR18}).  We do emphasise though  that the stabilised method is best implemented through the Nitsche's formulation.

The paper is organised as follows. In Section 2, we introduce the Poisson-Signorini problem, write it in a mixed variational form and state a continuous stability  result. In Section 3, we formulate the   stabilised finite element method, state a discrete stability estimate and prove an a priori error estimate. In Section 4, we introduce residual-based a posteriori error estimators and establish lower and upper bounds for the error in terms of the estimators. In Section 5, we deduce the Nitsche's formulation from the stabilised one and in Section 6
 report on our numerical computations.  
 
 We note that some of the proofs have been left out since they are formally similar to the ones proven in our previous works, in particular in \cite{GSV17,GSV18}.

\section{The continuous problem}

Let  $\Omega \subset \mathbb{R}^d$, $d\in\{2,3\}$, denote a polygonal (or polyhedral) domain
and $ \partial \Omega $ its boundary with $\partial\Omega =  \Gamma_D \cup \Gamma_N \cup \Gamma$. We assume that the  boundary parts $\Gamma_D,\Gamma_N$ and $\Gamma$ are all non-empty and non-overlapping, $\overline{\Gamma_D} \cap \overline{\Gamma}= \emptyset$ and that the part $\Gamma $ where the contact may occur coincides with one of the sides of the polygon (or the polyhedron). 

The Poisson-Signorini unilateral contact problem can be written as follows: find
$u$ such that
\begin{equation}\label{strong} 
\begin{aligned}
    -\Delta u &= f \quad && \text{in $\Omega$\,,} \\
    u &= 0 && \text{on $\Gamma_D$,} \\
    \frac{\partial u}{\partial n}  &= 0 && \text{on $\Gamma_N$,}  \\
    u\geq 0, \ \ \frac{\partial u}{\partial n}  &\geq 0, \ \  u \, \frac{\partial u}{\partial n} = 0 && \text{on $\Gamma$}\, ,
    \end{aligned}
\end{equation}
where $f \in L^2(\Omega)$ is a given load
function and $n$ denotes the outer normal vector to  $\Omega$. 
\begin{remark}
For  ease of exposition, we consider here  the simplest version of the Signorini problem although we could easily deal with a non-homogeneous Neumann condition $ \frac{\partial u}{\partial n}  = g$ on $\Gamma_N$, with  a gap function $\psi$ defined on $\Gamma$, such that  $u\geq \psi$ and $(u-\psi)\frac{\partial u}{\partial n} = 0 $ on $\Gamma$, and also with the linear elastic Signorini problem. 
    Note also that by assuming that $\overline{\Gamma_D} \cap \overline{\Gamma}= \emptyset$  we avoid introducing the Lions-Magenes space $H^{1/2}_{00}(\Gamma)$; cf. \cite{tartar2007introduction}.
\end{remark}

To give a weak formulation for problem \eqref{strong}, we introduce the Hilbert spaces
\[ 
V =\{ v\in H^{1}(\Omega) \, :\, v=0 \ {\rm  on} \ \Gamma_D \} ,  \quad W:= \{ w|_{\Gamma}: w\in V\}  =H^{1/2}(\Gamma),
\]
endowed with the norms
\[
\|v\|_V = \|\nabla v\|_0 \, , \qquad \|w\|_{W} = \inf_{v\in V, v|_{\Gamma}=w}\| v\|_V \, .
\]
Now, defining a non-negative  Lagrange multiplier by 
\begin{equation}
\lambda = \frac{\partial u}{\partial n} ,
\label{lagrmult}
\end{equation}
 we obtain the following (weak) mixed  variational   formulation of problem  \eqref{strong} (cf. \cite{HHNL}): find
$(u,\lambda)\in V\times \Lambda$ such that
 \begin{equation} \begin{aligned}
(\nabla u, \nabla v) - \langle v,\lambda\rangle &=(f,v) \quad & \forall v\in V \, , \\
\langle u, \mu-\lambda\rangle & \geq 0   \quad &\forall \mu \in \Lambda \, .
    \end{aligned}
\label{dualvf}
\end{equation}
Above
\[
\Lambda  = \{\mu \in Q  : \langle w,\mu\rangle \geq 0 \ \forall w\in W, w \geq 0 \ {\rm a.e. \ on} \ \Gamma \} \, ,
\]
where $Q:=W^\prime$ is the topological dual of $W$ and $\langle\cdot,\cdot\rangle:W\times Q \rightarrow \mathbb{R}$ denotes the duality pairing. The norm in $Q$ is defined as
\[
\|\mu\|_{Q} = \sup_{w\in  W} \frac{ \langle w,\mu\rangle } {\|w\|_W} \, .
\]

\begin{remark}
The corresponding primal  weak formulation of problem \eqref{strong}  reads: find $u\in K$ such that
 \begin{equation}
(\nabla u, \nabla (v-u))\geq (f,v-u) \quad \forall v\in K \, ,
\label{primalvf}
\end{equation}
where 
\[
K = \{ v\in V  : v \geq 0 \ {\rm a.e. \ on} \ \Gamma \} \, .
\]
\end{remark}

\begin{remark}
    \label{bindingremark}
It is well-known that problem \eqref{primalvf}, equivalently \eqref{dualvf}, admits a unique solution $u\in H^1(\Omega)$; cf. \cite{KS80,KO88}. However,  even if the boundary is smooth and the data  regular enough, the Signorini conditions limit the regularity of the solution. In fact, in the two-dimensional case the solution belongs only to $H^{r}, r<5/2$, in the vicinity of points at $\Gamma$ where the constraints change from binding to non-binding, cf. \cite{MM92} and the discussion in \cite{BB03}. If the boundary $\Gamma$ has corners, further singularities may occur; cf. \cite{ABBH13}.
\end{remark} 

Defining  the bilinear and linear forms, $\Bf : (V \times Q) \times (V \times Q) \rightarrow \mathbb{R}$ and $\Lf : V  \rightarrow \mathbb{R}$  through
\[
    \Bf(w,\xi;v,\mu) = (\nabla w, \nabla v) - \langle w, \mu \rangle - \langle v, \xi \rangle, \qquad
    \Lf(v) = (f,v) \, ,
\]
the mixed variational formulation of \eqref{dualvf}
reads: find $(u, \lambda) \in V \times \Lambda$ such that
\begin{equation}
    \label{contprob}
    \Bf(u,\lambda; v, \mu - \lambda) \leq \Lf(v) \quad \forall (v, \mu) \in V \times \Lambda.
\end{equation}

The proof of the following result is straightforward (see, e.g., \cite{GSV17}):
\begin{theorem}[Continuous stability] \label{contstab} For all $(v,\xi)\in V\times Q$ there exists $w\in V$ such that 
\begin{equation}
 \Bf(v,\xi;w,-\xi)\gtrsim   \big(   \Vert v  \Vert_{V} + \Vert \xi  \Vert_{Q}  \big)^{2}
 \end{equation}
  and 
   \begin{equation}
\Vert w\Vert_{1} \lesssim \Vert v  \Vert_{V} + \Vert \xi  \Vert_{Q} .
 \end{equation}
\end{theorem}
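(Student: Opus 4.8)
The plan is to establish the two estimates simultaneously by exhibiting an explicit test function $w$ of the form $w = v + \delta z$, where $z \in V$ is chosen to "activate" the multiplier $\xi$ and $\delta > 0$ is a small parameter fixed at the end. First I would record the diagonal identity: by definition of $\Bf$,
\[
\Bf(v,\xi;v,-\xi) = (\nabla v,\nabla v) - \langle v,-\xi\rangle - \langle v,\xi\rangle = \|\nabla v\|_0^2 = \|v\|_V^2,
\]
so the $v$-part of $w$ already controls $\|v\|_V^2$ but contributes nothing toward $\|\xi\|_Q^2$. Next I would deal with the multiplier norm. By the definition of $\|\xi\|_Q$ as a supremum over $W = H^{1/2}(\Gamma)$ and the definition of $\|\cdot\|_W$ as the trace-norm infimum, there exists, up to an arbitrarily small loss, a function $w_\xi \in W$ with $\|w_\xi\|_W = 1$ and $\langle w_\xi,\xi\rangle \geq \tfrac12 \|\xi\|_Q$; lifting it, I get $z \in V$ with $z|_\Gamma = w_\xi$ and $\|z\|_V \lesssim \|\xi\|_Q^{-1}\langle z|_\Gamma,\xi\rangle$ — more precisely I would normalise so that $\|z\|_V \lesssim 1$ and $\langle z,\xi\rangle \gtrsim \|\xi\|_Q$.

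Then I would compute, for $w = v + \delta z$,
\[
\Bf(v,\xi;w,-\xi) = \Bf(v,\xi;v,-\xi) + \delta\,\Bf(v,\xi;z,-\xi) = \|v\|_V^2 + \delta\big[(\nabla v,\nabla z) + \langle v,\xi\rangle + \langle z,\xi\rangle\big].
\]
(Here I use that $\Bf(v,\xi;z,-\xi) = (\nabla v,\nabla z) - \langle z,-\xi\rangle - \langle v,\xi\rangle$; one must track the signs carefully, since $\langle v,\xi\rangle$ has indefinite sign.) The term $\delta\langle z,\xi\rangle \gtrsim \delta\|\xi\|_Q$ is the good term; the cross terms $\delta(\nabla v,\nabla z)$ and $\delta\langle v,\xi\rangle$ are absorbed using Young's inequality: $\delta|(\nabla v,\nabla z)| \leq \tfrac14\|v\|_V^2 + C\delta^2\|z\|_V^2 \lesssim \tfrac14\|v\|_V^2 + C\delta^2$, and $\delta|\langle v,\xi\rangle| \leq \delta\|v\|_V\|\xi\|_Q \leq \tfrac14\|v\|_V^2 + C\delta^2\|\xi\|_Q^2$. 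Wait — the last absorption is the delicate point: $C\delta^2\|\xi\|_Q^2$ must be dominated by the good term $\delta\|\xi\|_Q$, which holds once $\delta$ is chosen small \emph{relative to a fixed constant} but then $\delta\|\xi\|_Q^2$ is not obviously controlled for large $\|\xi\|_Q$. The resolution is standard: one bounds $\delta|\langle v,\xi\rangle| \le \tfrac12 \delta^{-1}\|v\|_V^2 \cdot \delta^2 \cdot (\text{...})$ — better, use $\delta|\langle v,\xi\rangle| \le c\,\delta\|v\|_V \cdot \delta\|\xi\|_Q \le \tfrac14\|v\|_V^2 + c^2\delta^2\|\xi\|_Q^2$ and then require $c^2\delta \le \tfrac14$, absorbing into $\tfrac12\delta\|\xi\|_Q \le \tfrac12\delta\|\xi\|_Q^2 / \|\xi\|_Q$... this only works when $\|\xi\|_Q \ge 1$. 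The clean fix, which I would adopt, is to instead scale $z$ so that $\|z\|_V \lesssim \|\xi\|_Q$ and $\langle z,\xi\rangle \gtrsim \|\xi\|_Q^2$; then all terms are genuinely quadratic and homogeneous of degree two, and a single small $\delta$ independent of the data closes the estimate: $\Bf(v,\xi;w,-\xi) \ge \tfrac12\|v\|_V^2 + \tfrac{\delta}{2}\|\xi\|_Q^2 - C\delta^2\|\xi\|_Q^2 \gtrsim \|v\|_V^2 + \|\xi\|_Q^2 \gtrsim (\|v\|_V + \|\xi\|_Q)^2$.

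With that scaling, the norm bound is immediate: $\|w\|_1 \le \|v\|_1 + \delta\|z\|_1 \lesssim \|v\|_V + \|\xi\|_Q$, using Poincaré–Friedrichs on $V$ (valid since $\Gamma_D \ne \emptyset$) to pass between $\|\cdot\|_1$ and $\|\cdot\|_V$. The main obstacle, as flagged above, is purely bookkeeping: getting a homogeneous-degree-two inequality so that one universal $\delta$ suffices, which forces the correct normalisation of the lifting $z$ at the outset; once that is set, everything is Cauchy–Schwarz and Young. Since this is exactly the argument of \cite{GSV17}, I would state it tersely and refer there for the details.
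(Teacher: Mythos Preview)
Your approach is the standard one and coincides with the paper's, which gives no argument at all beyond the remark that the proof is straightforward and a reference to \cite{GSV17}; your closing sentence is therefore exactly what the paper does.

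One bookkeeping correction is worth recording, since you flag the sign tracking yourself. From the definition $\Bf(w,\xi;v,\mu) = (\nabla w,\nabla v) - \langle w,\mu\rangle - \langle v,\xi\rangle$ (note which slot carries $\mu$ and which carries $\xi$) one obtains directly
\[
\Bf(v,\xi;v+\delta z,-\xi) = \|v\|_V^2 + \delta(\nabla v,\nabla z) - \delta\langle z,\xi\rangle,
\]
so two things differ from what you wrote: the $\langle v,\xi\rangle$ contributions cancel completely (your discussion of how to absorb a $\delta\langle v,\xi\rangle$ cross term is unnecessary), and the sign in front of $\langle z,\xi\rangle$ is negative. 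Hence the lifting should be chosen with $-\langle z,\xi\rangle \gtrsim \|\xi\|_Q^2$ and $\|z\|_V \lesssim \|\xi\|_Q$, i.e.\ replace your $z$ by $-z$. With that swap your ``clean fix'' paragraph goes through verbatim, and in fact more simply, since the only cross term to control is $\delta(\nabla v,\nabla z)$.
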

Note that above and in the following we write $a \gtrsim b$ (or $a \lesssim b$) when $a \geq C b$ (or $a \leq C b$) for some positive constant $C$ independent of the finite element mesh.

\section{The stabilised method}

Our discretisation is based on a conforming shape-regular  partitioning $\Ch$ of $\Omega$ into non-overlapping triangles or tetrahedra, with $h>0$ denoting the mesh parameter.  We denote the interior edges or facets  of $\Ch$ by $\mathcal{E}_h$  and let $\mathcal{G}_h$ and $\mathcal{N}_h$ be the partitioning of the boundary $\Gamma$ and $\Gamma_N$, respectively, corresponding to the partitioning $\Ch$,   into line segments ($d = 2$) or triangles ($d = 3$).
The finite element spaces 
\[
V_h\subset V, \quad  Q_h\subset Q, \]
are  finite dimensional and consist of piecewise polynomial functions. We also define
\[
 \Lambda_h=\{\mu_h\in Q_h : \mu_h  \geq 0 \ {\rm on} \ \Gamma\}\subset \Lambda\,,
\]
and introduce the discrete bilinear form  $\mathcal{B}_h$ as
\[
   \displaystyle  \mathcal{B}_h(w,\xi;v,\mu)= \mathcal{B}(w,\xi;v,\mu) - \alpha\, \mathcal{S}_h(w,\xi;v,\mu)\,,\\
\]
where $\alpha>0$ is a stabilisation parameter and  the stabilising term $\mathcal{S}_h$ is defined as 
\[
\mathcal{S}_h(w,\xi;v,\mu)= \sum_{E\in \mathcal{G}_h} h_E \left(\xi -\frac{\partial w}{\partial n},\mu -\frac{\partial v}{\partial n}\right)_E ,
\]
with  $h_E$ denoting the diameter of $E\in\mathcal{G}_h$.  

The
stabilised  finite element method is now formulated as:
find $(u_h,\lambda_h)\in V_h\times \Lambda_h$ such that
\begin{equation}
\mathcal{B}_h(u_h,\lambda_h;v_h,\mu_h-\lambda_h) \leq \mathcal{L}(v_h)  \quad \forall(v_h,\mu_h)\in V_h\times \Lambda_h.
\label{stabfem}
\end{equation}

The proof of stability for method \eqref{stabfem} is very similar to the one given for the obstacle problem in \cite[Theorem 4.1]{GSV17}, see also \cite{GSV18}, and is thus omitted. We only note that to estimate the stabilising term we use the following inverse estimate, proven by a scaling argument.
    \begin{lemma}
    There exists $C_I > 0$, independent of $h$, such that
    \begin{equation}
        C_I \sum_{E \in \Gh} h_E \left\| \frac{\partial v_{h}}{\partial n} \right\|_{0,E}^2 \leq  \|v_{h}\|_{V}^2 \quad \forall v_{h} \in V_{h}\, .
    \label{inverse}
    \end{equation}
    \end{lemma}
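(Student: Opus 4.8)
The plan is to establish \eqref{inverse} element by element via a standard affine scaling argument, the essential point being that $v_h$ restricts to a polynomial of a fixed degree $k$ on every simplex. Fix a boundary face $E \in \Gh$ and let $K \in \Ch$ be the unique element of the triangulation having $E$ as one of its faces. Let $F_K\colon \hat K \to K$ be the affine bijection from a fixed reference simplex $\hat K$, chosen so that a distinguished face $\hat E$ of $\hat K$ is mapped onto $E$, and set $\hat v := v_h \circ F_K \in P_k(\hat K)$. Shape-regularity of $\Ch$ guarantees $\|F_K'\| \lesssim h_K$, $\|(F_K')^{-1}\| \lesssim h_K^{-1}$, and that $h_E$ and $h_K$ are comparable, all with constants depending only on the shape-regularity parameter and on $d$. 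On the reference configuration, the maps $\hat v \mapsto \|\hat\nabla \hat v\|_{0,\hat K}$ and $\hat v \mapsto \|\partial\hat v/\partial\hat n\|_{0,\hat E}$ are seminorms on the finite-dimensional space $P_k(\hat K)$ which both vanish exactly on the constants, hence they are equivalent on the quotient $P_k(\hat K)/\mathbb R$; in particular
\[
\left\| \frac{\partial\hat v}{\partial\hat n}\right\|_{0,\hat E}^2 \le \hat C\, \|\hat\nabla\hat v\|_{0,\hat K}^2 ,
\]
with $\hat C = \hat C(\hat K,k,d)$.

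Next I would push this estimate forward to $K$. Using $\nabla v_h = (F_K')^{-T}\hat\nabla\hat v$, the transformation formulas for the volume element on $K$ and the surface element on $E$ under $F_K$, and the bounds on $F_K'$ (the unit normal on $E$ differs from the push-forward of $\hat n$ only by factors controlled by the cofactor matrix of $F_K'$, hence by shape-regularity), the various powers of $h_K$ combine so that
\[
h_E \left\| \frac{\partial v_h}{\partial n}\right\|_{0,E}^2 \lesssim \|\nabla v_h\|_{0,K}^2 ,
\]
with a constant independent of $K$ and of $h$.

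It then remains to sum over $E \in \Gh$. Since each simplex has at most one face lying on the flat piece $\Gamma$, the elements $K = K_E$ produced above are mutually distinct, so that
\[
\sum_{E\in\Gh} h_E \left\| \frac{\partial v_h}{\partial n}\right\|_{0,E}^2 \lesssim \sum_{E\in\Gh} \|\nabla v_h\|_{0,K_E}^2 \le \sum_{K\in\Ch} \|\nabla v_h\|_{0,K}^2 = \|\nabla v_h\|_{0,\Omega}^2 = \|v_h\|_V^2 ,
\]
which is \eqref{inverse} with $C_I$ the reciprocal of the accumulated constant. The main obstacle is essentially bookkeeping: one has to verify that the powers of $h_K$ generated by the scaling of $\nabla v_h$, of $\mathrm{d}s$ on $E$ and of $\mathrm{d}x$ on $K$ cancel correctly on the two sides, so that the reference constant $\hat C$ transfers uniformly and the final $C_I$ depends only on the shape-regularity parameter, the polynomial degree $k$ and the dimension $d$. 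In particular, because the argument is entirely local, shape-regularity suffices and no global quasi-uniformity of the mesh is required.
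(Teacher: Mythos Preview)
Your approach is exactly what the paper has in mind: the paper merely says the lemma is ``proven by a scaling argument'' and gives no further details, so your write-up is in fact more complete than the paper's own treatment.

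One small correction is needed in the reference-element step. The seminorm $\hat v \mapsto \|\partial\hat v/\partial\hat n\|_{0,\hat E}$ does \emph{not} vanish exactly on the constants: any polynomial whose gradient along $\hat E$ is purely tangential (for instance a linear function varying only in a direction parallel to $\hat E$) lies in its kernel. Hence the two seminorms are not equivalent on $P_k(\hat K)/\mathbb{R}$, and your justification via ``both vanish exactly on the constants'' is not valid as stated. Fortunately you only use the one-sided bound
\[
\left\| \frac{\partial\hat v}{\partial\hat n}\right\|_{0,\hat E} \le \hat C\, \|\hat\nabla\hat v\|_{0,\hat K},
\]
and this does hold, for the simpler reason that $\|\hat\nabla\hat v\|_{0,\hat K}$ \emph{is} a genuine norm on the finite-dimensional quotient $P_k(\hat K)/\mathbb{R}$, while $\|\partial\hat v/\partial\hat n\|_{0,\hat E}$ is a continuous seminorm there; on a finite-dimensional space every seminorm is dominated by every norm. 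With this fix the scaling bookkeeping and the summation over $E\in\Gh$ go through as you wrote.
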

    
\begin{theorem}[Discrete stability] \label{discrstab} Let $ 0< \alpha < C_{I}$.
For all $(v_{h},\xi_{h})\in V_{h}\times Q_{h}$, there exists $w_{h}\in V_{h}$, such that 
\begin{equation}
 \Bf_{h}(v_{h},\xi_{h};w_{h},-\xi_{h}) \gtrsim \left(   \| v _{h}\|_V+ \| \xi_{h} \|_{Q}   + \left(\sum_{E \in \Gh} h_E \left\|\xi_h\right\|_{0,E}^2\right)^{1/2} \right)^2
\end{equation}
and 
\begin{equation}
  \|w _{h}\|_{V}\lesssim \| v _{h}\|_{V}+ \| \xi _{h}\|_{Q}.
\end{equation}
\label{discstab}
\end{theorem}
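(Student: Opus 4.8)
The plan is to adapt the standard stability argument for stabilised mixed methods: combine a ``diagonal'' test function with an inf-sup correction, and control the extra stabilising terms by the inverse estimate \eqref{inverse}. For brevity write $|\xi_h|_h := \bigl(\sum_{E\in\Gh} h_E\|\xi_h\|_{0,E}^2\bigr)^{1/2}$, so that the claim is $\Bf_h(v_h,\xi_h;w_h,-\xi_h)\gtrsim(\|v_h\|_V+\|\xi_h\|_Q+|\xi_h|_h)^2$ together with $\|w_h\|_V\lesssim\|v_h\|_V+\|\xi_h\|_Q$. First I would evaluate $\Bf_h(v_h,\xi_h;\,\cdot\,,-\xi_h)$ at $w_h=v_h$: the two duality terms cancel, and since $(\xi_h-\tfrac{\partial v_h}{\partial n},-\xi_h-\tfrac{\partial v_h}{\partial n})_E=\|\tfrac{\partial v_h}{\partial n}\|_{0,E}^2-\|\xi_h\|_{0,E}^2$, the stabilising term contributes $+\alpha|\xi_h|_h^2$ together with $-\alpha\sum_{E}h_E\|\tfrac{\partial v_h}{\partial n}\|_{0,E}^2$; bounding the last sum by \eqref{inverse} gives
\[
\Bf_h(v_h,\xi_h;v_h,-\xi_h)\;\geq\;\Bigl(1-\tfrac{\alpha}{C_I}\Bigr)\|v_h\|_V^2+\alpha\,|\xi_h|_h^2 ,
\]
with positive leading coefficient because $\alpha<C_I$. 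This already controls $\|v_h\|_V$ and $|\xi_h|_h$; only $\|\xi_h\|_Q$ is missing.

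To recover $\|\xi_h\|_Q$ I would argue as in the continuous case (Theorem~\ref{contstab}): choose $w\in W$ with $\|w\|_W\lesssim\|\xi_h\|_Q$ and $\langle w,\xi_h\rangle\gtrsim\|\xi_h\|_Q^2$, extend it to $z\in V$ with $z|_\Gamma=w$, $\|z\|_V\lesssim\|\xi_h\|_Q$ and $\langle z,\xi_h\rangle\gtrsim\|\xi_h\|_Q^2$, and then set $z_h:=\Pi_h z\in V_h$, where $\Pi_h:V\to V_h$ is a Cl\'ement/Scott--Zhang type quasi-interpolant that is $H^1$-stable, $\|\Pi_h z\|_V\lesssim\|z\|_V$, and satisfies the weighted trace bound $\sum_{E\in\Gh}h_E^{-1}\|z-\Pi_h z\|_{0,E}^2\lesssim\|z\|_V^2$; such an operator is available by shape-regularity (cf. \cite{GSV17,GSV18}). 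Since $\xi_h\in Q_h$ is piecewise polynomial, hence an $L^2(\Gamma)$-function, the pairing $\langle z-z_h,\xi_h\rangle$ is an integral over $\Gamma$, so an elementwise Cauchy--Schwarz estimate gives $|\langle z-z_h,\xi_h\rangle|\lesssim\|z\|_V\,|\xi_h|_h$, whence $\langle z_h,\xi_h\rangle\gtrsim\|\xi_h\|_Q^2-C\,\|\xi_h\|_Q\,|\xi_h|_h$.

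Then I would take $w_h:=v_h-\delta z_h\in V_h$ with $\delta>0$ small, to be fixed. By linearity of $\Bf_h$ in its test pair — the contributions $\pm\langle v_h,\xi_h\rangle$ cancelling as in Step~1 —
\[
\Bf_h(v_h,\xi_h;w_h,-\xi_h)=\Bf_h(v_h,\xi_h;v_h,-\xi_h)-\delta(\nabla v_h,\nabla z_h)+\delta\langle z_h,\xi_h\rangle-\alpha\delta\sum_{E\in\Gh}h_E\Bigl(\xi_h-\tfrac{\partial v_h}{\partial n},\,\tfrac{\partial z_h}{\partial n}\Bigr)_E .
\]
The first term is bounded below by Step~1; the gradient cross term is $\lesssim\delta\|v_h\|_V\|z_h\|_V\lesssim\delta\|v_h\|_V\|\xi_h\|_Q$; the third term is handled by Step~2; and the last sum is $\lesssim\delta(|\xi_h|_h+\|v_h\|_V)\|\xi_h\|_Q$ by Cauchy--Schwarz and \eqref{inverse}. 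Applying Young's inequality to each cross term and fixing $\delta$ small enough — depending only on $\alpha$, $C_I$ and the mesh-independent constants of Theorem~\ref{contstab} and of $\Pi_h$ — every cross contribution is absorbed into $(1-\tfrac{\alpha}{C_I})\|v_h\|_V^2$, into $\alpha|\xi_h|_h^2$, or into a fraction of $\delta\|\xi_h\|_Q^2$ (the crucial point being that the harmful $\|\xi_h\|_Q^2$-terms are $O(\delta^2)$ while the good one is $O(\delta)$), leaving $\Bf_h(v_h,\xi_h;w_h,-\xi_h)\gtrsim\|v_h\|_V^2+\|\xi_h\|_Q^2+|\xi_h|_h^2$, which is the asserted lower bound. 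The boundedness is immediate: $\|w_h\|_V\leq\|v_h\|_V+\delta\|\Pi_h z\|_V\lesssim\|v_h\|_V+\|z\|_V\lesssim\|v_h\|_V+\|\xi_h\|_Q$.

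The only genuinely delicate ingredient is the quasi-interpolant $\Pi_h$ with its weighted trace estimate and its compatibility with the duality pairing on $\Gamma$; once that is in place the remainder is bookkeeping with Young's inequality and the choice of $\delta$, and these local scaling arguments are identical to those in \cite[Theorem~4.1]{GSV17}.
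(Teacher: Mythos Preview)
Your argument is correct and is precisely the route the paper intends: the authors omit the proof and refer to \cite[Theorem~4.1]{GSV17}, whose structure is exactly your three-step scheme (diagonal test $(v_h,-\xi_h)$ controlled via \eqref{inverse}, inf--sup witness $z$ with Cl\'ement-type interpolant $z_h$ and weighted trace estimate to pass the duality pairing, and a $\delta$-combination balanced by Young's inequality). Nothing more is needed.
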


\begin{remark}
The additional stability for $\xi_h$   in a mesh-dependent norm in Theorem~\ref{discstab}  will be needed in the proof of Theorem \ref{apriori}. 
\end{remark}

We will need the following Lemma in establishing the a priori error estimate. Its detailed proof can be found in \cite{GSV18}.
\begin{lemma}
    \label{lem:lowresidual} Let $f_h\in V_h$ be the $L^2$ projection of $f$, define
\[
\osc_K(f)=h_K\| f-f_h\|_{0,K}
\]
and, for each $E \in \Gh$, let  $K(E) \in \Ch$ denote the element satisfying $\partial K(E) \cap E = E$.
    For any $(v_h, \mu_h) \in V_h \times Q_h$ it holds that
    \begin{equation}
        \label{eq:lowresidual}
        \begin{aligned}
            &\left( \sum_{E \in \Gh} h_E \left\| \mu_h - \frac{\partial v_{h}}{\partial n} \right\|_{0,E}^2\right)^{1/2} \\
            &\qquad \lesssim \|u-v_h\|_V  + \|\lambda-\mu_h\|_Q + \left(\sum_{E \in \Gh} {\rm osc}_{K(E)} (f)^2\right)^{1/2}.
        \end{aligned}
    \end{equation}
\end{lemma}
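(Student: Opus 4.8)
The plan is to run the standard residual/bubble-function argument, with the one non-routine twist of assembling all the local test functions into a single global function before the dual norm $\|\lambda-\mu_h\|_Q$ is invoked. Fix $E\in\Gh$, put $K=K(E)$ and let $r_h:=\mu_h|_E-\frac{\partial v_h}{\partial n}\big|_E$, which is a polynomial on $E$ since $Q_h$ and $V_h$ are piecewise polynomial. Let $b_E$ be the face bubble of $K$ (vanishing on $\partial K\setminus E$), let $\tilde r_h$ be the polynomial extension of $r_h$ to $K$ that is constant in the direction normal to the flat face $E\subset\Gamma$, and set $w_E:=h_E\,b_E\,\tilde r_h$, extended by zero outside $K$. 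Then $w_E$ vanishes on $\partial\Omega$ off $E$, hence on $\Gamma_D$, so $w_E\in V$, and reference-element scaling gives $\int_E w_E r_h\,\mathrm{d}s\gtrsim h_E\|r_h\|_{0,E}^2$, $\|w_E\|_{0,K}\lesssim h_E^{3/2}\|r_h\|_{0,E}$ and $\|w_E\|_V=\|\nabla w_E\|_{0,K}\lesssim h_E^{1/2}\|r_h\|_{0,E}$. Since $\Gamma$ coincides with one side of the polygon, no element of $\Ch$ can have two faces in $\Gh$, so the elements $K(E)$, $E\in\Gh$, are distinct; hence $w:=\sum_{E\in\Gh}w_E$ satisfies $\|w\|_V^2=\sum_{E}\|w_E\|_V^2\lesssim S^2$, where $S:=\big(\sum_{E\in\Gh}h_E\|r_h\|_{0,E}^2\big)^{1/2}$ is the quantity to be bounded.

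Next I would test with $w$. Using $\mu_h\in L^2(\Gamma)$, the first equation in \eqref{dualvf} (valid since $w\in V$), and integration by parts of $\int_E w_E\frac{\partial v_h}{\partial n}$ over each $K(E)$, summing the local lower bounds gives
\[
S^2\lesssim\sum_{E\in\Gh}\int_E w_E r_h\,\mathrm{d}s=\langle w,\mu_h-\lambda\rangle+(\nabla(u-v_h),\nabla w)-(f-f_h,w)-(f_h+\Delta v_h,w),
\]
where $f_h+\Delta v_h$ is the (piecewise-polynomial) interior residual. Estimating the four terms in the obvious way — $W$–$Q$ duality and $\|w\|_W\le\|w\|_V$ for the first, Cauchy–Schwarz and the scaling bounds on $w_E$ together with $h_E$ comparable to $h_{K(E)}$ for the rest — one obtains $\langle w,\mu_h-\lambda\rangle\le\|w\|_V\|\lambda-\mu_h\|_Q$, $(\nabla(u-v_h),\nabla w)\le\|u-v_h\|_V\|w\|_V$, $(f-f_h,w)\lesssim S\big(\sum_E\osc_{K(E)}(f)^2\big)^{1/2}$ and $(f_h+\Delta v_h,w)\lesssim S\big(\sum_E h_{K(E)}^2\|f_h+\Delta v_h\|_{0,K(E)}^2\big)^{1/2}$. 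Dividing through by $S$ yields
\begin{multline*}
S\lesssim\|u-v_h\|_V+\|\lambda-\mu_h\|_Q\\
+\Big(\sum_{E\in\Gh}\osc_{K(E)}(f)^2\Big)^{1/2}+\Big(\sum_{E\in\Gh}h_{K(E)}^2\|f_h+\Delta v_h\|_{0,K(E)}^2\Big)^{1/2}.
\end{multline*}

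It then remains to absorb the interior-residual term, which is not controlled by data oscillation on its own. For this I would use the classical element-bubble efficiency estimate: testing the weak form with $(f_h+\Delta v_h)\,b_K$, where $b_K$ is the interior bubble of $K$ (the boundary pairing with $\lambda$ drops out since $b_K=0$ on $\partial K$), gives $h_K\|f_h+\Delta v_h\|_{0,K}\lesssim\|\nabla(u-v_h)\|_{0,K}+\osc_K(f)$ for every $K\in\Ch$, in particular for $K=K(E)$; summing over $E\in\Gh$ bounds the last term above by $\|u-v_h\|_V+\big(\sum_E\osc_{K(E)}(f)^2\big)^{1/2}$, and substituting back gives \eqref{eq:lowresidual}.

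The only genuinely delicate point is the one just highlighted: one must not estimate edge by edge and then sum, since $\|\lambda-\mu_h\|_Q$ is a global quantity and $\#\Gh$ copies of it would destroy the bound; constructing the single test function $w$ with $\|w\|_V\lesssim S$ and using the duality pairing exactly once is what makes the argument close. Everything else — the scaling estimates for $w_E$ on the reference element and the interior-residual efficiency bound — is standard and I would only sketch it.
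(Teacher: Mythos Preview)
The paper does not give its own proof of this lemma, merely pointing to \cite{GSV18} for the details; your argument is correct and is exactly the standard face-bubble lower-bound technique used there, including the one nonstandard point you rightly flag---building the single global test function $w=\sum_{E}w_E$ with $\|w\|_V\lesssim S$ and invoking the $W$--$Q$ duality once, so that the non-local norm $\|\lambda-\mu_h\|_Q$ appears with the correct constant. The subsequent reduction of the interior-residual term via the element-bubble efficiency bound $h_K\|f_h+\Delta v_h\|_{0,K}\lesssim\|\nabla(u-v_h)\|_{0,K}+\osc_K(f)$ is likewise standard and closes the estimate as claimed.
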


As usual, the a priori  estimate now follows from the discrete stability estimate.

\begin{theorem}[A priori error estimate] Let $(u, \lambda) \in V \times \Lambda$ be the solution to the continuous problem \eqref{contprob} and let $(u_h,\lambda_h)\in V_h\times \Lambda_h$ be its approximation obtained by solving  the discrete problem \eqref{stabfem}.
Then the following  estimate holds
\[
        \begin{aligned}
\|u-u_h\|_V+\| \lambda -\lambda_h\|_{Q} \  \lesssim \ & \inf_{v_h\in V_h} \| u-v_h\|_V + \inf_{\mu_h\in\Lambda_h} \left( \| \lambda-\mu_h\|_{Q} + \sqrt{\langle u,\mu_h\rangle}\, \right) \\
            &+ \left(\sum_{E \in \Gh} {\rm osc}_{K(E)} (f)^2\right)^{1/2} \, .
      \end{aligned}
      \]
      \label{apriori}
\end{theorem}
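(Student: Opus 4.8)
The plan is to combine the discrete stability estimate of Theorem~\ref{discstab} with a consistency (Galerkin orthogonality) argument adapted to the variational-inequality setting, following the standard pattern for mixed stabilised methods applied to contact problems. First I would fix arbitrary $v_h \in V_h$ and $\mu_h \in \Lambda_h$ and split the error using the triangle inequality as
\[
\|u-u_h\|_V + \|\lambda-\lambda_h\|_Q \le \big( \|u-v_h\|_V + \|\lambda-\mu_h\|_Q \big) + \big( \|v_h-u_h\|_V + \|\mu_h-\lambda_h\|_Q \big),
\]
so that it suffices to bound the discrete error $(v_h-u_h, \mu_h-\lambda_h)$. To this end I would apply Theorem~\ref{discstab} with $(v_h-u_h, \mu_h-\lambda_h)$ in place of $(v_h,\xi_h)$, obtaining a test function $w_h \in V_h$ with $\|w_h\|_V \lesssim \|v_h-u_h\|_V + \|\mu_h-\lambda_h\|_Q$ and
\[
\big( \|v_h-u_h\|_V + \|\mu_h-\lambda_h\|_Q + S \big)^2 \lesssim \Bf_h(v_h-u_h, \mu_h-\lambda_h; w_h, -(\mu_h-\lambda_h)),
\]
where $S = \big(\sum_{E\in\Gh} h_E \|\mu_h-\lambda_h\|_{0,E}^2\big)^{1/2}$; the extra term $S$ on the left will be used to absorb the stabilisation contributions generated on the right.

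Next I would expand the right-hand side by bilinearity into a continuous part $\Bf_h(u,\lambda;\cdot,\cdot)$ and the mixed/stabilisation remainders. The key consistency input is that the continuous solution satisfies the strong equations, so that $\Bf(u,\lambda;w_h,\mu_h) = \Lf(w_h) - \langle w_h,\lambda\rangle + \dots$ in the appropriate sense, and crucially $\lambda = \partial u/\partial n$ makes the stabilising term $\Sf_h$ vanish when evaluated at the exact pair; this is exactly why Nitsche/stabilised consistency holds. Using the discrete inequality \eqref{stabfem} tested against $(w_h + u_h, \dots)$ — more precisely, exploiting $\Bf_h(u_h,\lambda_h;w_h-u_h, \cdot) \le \Lf(w_h-u_h)$ together with the complementarity terms $\langle u_h,\mu_h-\lambda_h\rangle$ — I would reduce everything to: (i) approximation terms $\|u-v_h\|_V + \|\lambda-\mu_h\|_Q$ times $\|w_h\|_V + \|\mu_h-\lambda_h\|_Q$, controlled by Young's inequality; (ii) the stabilisation residual $\alpha \Sf_h(v_h-u_h,\mu_h-\lambda_h; w_h,-(\mu_h-\lambda_h))$, which after Cauchy--Schwarz and the inverse estimate \eqref{inverse} produces factors of $S$ and of $\big(\sum_E h_E\|\mu_h - \partial v_h/\partial n\|_{0,E}^2\big)^{1/2}$; and (iii) the sign term coming from the variational inequality, namely $\langle u,\mu_h-\lambda_h\rangle \le \langle u,\mu_h\rangle$ (since $\langle u,\lambda_h\rangle \ge 0$ as $u\ge 0$ on $\Gamma$ and $\lambda_h\in\Lambda_h$), which contributes $\langle u,\mu_h\rangle$ — its square root after Young's inequality gives the $\sqrt{\langle u,\mu_h\rangle}$ term.

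The term $\big(\sum_E h_E\|\mu_h - \partial v_h/\partial n\|_{0,E}^2\big)^{1/2}$ arising in step (ii) is precisely the quantity bounded in Lemma~\ref{lem:lowresidual} by $\|u-v_h\|_V + \|\lambda-\mu_h\|_Q$ plus the oscillation term $\big(\sum_E \osc_{K(E)}(f)^2\big)^{1/2}$, so substituting that estimate closes the argument: all right-hand contributions are either multiplied by $S$ or by $\|w_h\|_V + \|\mu_h-\lambda_h\|_Q$ (hence absorbable into the left-hand side via Young after dividing by the common factor), or are of the form (approximation $+$ oscillation)$^2$ and $\langle u,\mu_h\rangle$. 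Taking square roots and then the infimum over $v_h \in V_h$ and $\mu_h\in\Lambda_h$ yields the stated estimate. The main obstacle, and the step requiring the most care, is the bookkeeping of the variational-inequality sign terms: one must correctly handle the complementarity pairings $\langle u_h,\mu_h-\lambda_h\rangle$ and $\langle u,\mu_h-\lambda_h\rangle$ so that only the $\langle u,\mu_h\rangle$ term survives with the right sign, since a careless manipulation either loses consistency or produces a term that cannot be absorbed; this is where the structure $\mu_h,\lambda_h \in \Lambda_h$ and $u \ge 0$ on $\Gamma$ is essential.
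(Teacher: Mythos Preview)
Your approach is essentially the paper's: triangle inequality, discrete stability (Theorem~\ref{discstab}) applied to the discrete difference, then a consistency splitting that leaves approximation terms handled by continuity, a stabilisation residual handled by Cauchy--Schwarz, the inverse estimate~\eqref{inverse} and Lemma~\ref{lem:lowresidual}, and the variational-inequality term reduced to $\langle u,\mu_h\rangle$. Your observation $\langle u,\mu_h-\lambda_h\rangle \le \langle u,\mu_h\rangle$ via $\langle u,\lambda_h\rangle\ge 0$ is in fact a slightly more direct route than the paper's (which detours through $\langle u,\lambda_h-\lambda\rangle\ge 0$ and $\langle u,\lambda\rangle=0$).

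There is one point where your sketch is looser than the paper and could fail as written. You assert that ``$\lambda=\partial u/\partial n$ makes the stabilising term $\Sf_h$ vanish when evaluated at the exact pair'', i.e.\ you plan to insert $(u,\lambda)$ into $\Sf_h$ and use $\Sf_h(u,\lambda;\cdot,\cdot)=0$. But the theorem is stated with no extra regularity on $u$; for $u\in H^1(\Omega)$ only, $\partial u/\partial n|_E$ need not lie in $L^2(E)$, so $\Sf_h(u,\lambda;\cdot,\cdot)$ is not defined. The paper avoids this entirely by splitting differently: after using the discrete inequality $\Bf_h(u_h,\lambda_h;w_h,\mu_h-\lambda_h)\le\Lf(w_h)$ (just test \eqref{stabfem} with $(w_h,\mu_h)$, not $(w_h+u_h,\dots)$), it writes $\Bf_h(v_h,\mu_h;\cdot)=\Bf(v_h,\mu_h;\cdot)-\alpha\,\Sf_h(v_h,\mu_h;\cdot)$ and then adds and subtracts $\Bf(u,\lambda;\cdot)$ only inside the \emph{unstabilised} form $\Bf$. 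Thus the exact pair never enters $\Sf_h$; the stabilisation residual that survives is $\alpha\sum_E h_E(\mu_h-\partial v_h/\partial n,\,\mu_h-\lambda_h-\partial w_h/\partial n)_E$, which is precisely the discrete quantity you then bound with Lemma~\ref{lem:lowresidual}. Adjust your decomposition accordingly and the rest of your argument goes through.
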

\begin{proof} In view of Theorem \ref{discrstab}, it holds that
    \begin{align*}
        \Bigg( \left(\sum_{E \in \Gh} h_E \left\|\mu_h -\lambda_h\right\|_{0,E}^2\right)^{1/2}  +   &   \|u_h-v_h\|_V+\|\lambda_h-\mu_h\|_{Q} \Bigg)^2 \\
& \lesssim \mathcal{B}_h(u_h-v_h,\lambda_h-\mu_h;w_h,\mu_h-\lambda_h)\, ,
    \end{align*}
with some $w_h\in V_h$    satisfying
   \begin{equation}
        \|w_h\|_V \lesssim \|u_h-v_h\|_V+\|\lambda_h-\mu_h\|_{Q} \, .
    \label{wbound}
    \end{equation}
    It follows that
    \begin{align*}
        &\mathcal{B}_h(u_h-v_h,\lambda_h-\mu_h;w_h,\mu_h-\lambda_h)\\
        &=\mathcal{B}_h(u_h,\lambda_h;w_h,\mu_h-\lambda_h)-\mathcal{B}_h(v_h,\mu_h;w_h,\mu_h-\lambda_h)\\
                &\leq \mathcal{L}(w_h)+\mathcal{B}(u-v_h,\lambda-\mu_h;w_h,\mu_h-\lambda_h)-\mathcal{B}(u,\lambda;w_h,\mu_h-\lambda_h)\\
        &\phantom{=}+\alpha \sum_{E \in \Gh} h_E \left(\mu_h -\frac{\partial v_h}{\partial n},\mu_h -\lambda_h-\frac{\partial w_h}{\partial n}\right)_E ,\\
    \end{align*}
where we have used the bilinearity of $\mathcal{B}$ and $\mathcal{B}_h$, and the discrete problem statement  \eqref{stabfem}.
Observe that
\[
\begin{aligned}
    \mathcal{L}(w_h) -\mathcal{B}(u,\lambda;w_h,\mu_h-\lambda_h) &= (f,w_h) - (\nabla u, \nabla w_h) + \langle w_h, \lambda \rangle + \langle u,\mu_h-\lambda_h\rangle \\
 & \leq  \langle u,\mu_h-\lambda_h\rangle + \langle u,\lambda_h-\lambda\rangle  =   \langle u,\mu_h-\lambda\rangle =  \langle u,\mu_h\rangle  \, ,
 \end{aligned}
 \]
 where we have used the problem \eqref{dualvf} and recalled that  $0\leq \langle u,\mu-\lambda\rangle,  \ \forall \mu \in \Lambda$.
 Moreover
 \[\begin{aligned}
\sum_{E \in \Gh}  & h_E \left(\mu_h -\frac{\partial v_h}{\partial n},  \mu_h -\lambda_h-\frac{\partial w_h}{\partial n}\right)_E   \\
\leq  \ & \left( \sum_{E \in \Gh} h_E \left\|\mu_h -\frac{\partial v_h}{\partial n}\right\|_{0,E}^2\right)^{1/2} \,\left( \sum_{E \in \Gh} h_E \left\|\mu_h -\lambda_h\right\|_{0,E}^2\right)^{1/2} 
\\
 & +  \left( \sum_{E \in \Gh} h_E \left\|\mu_h -\frac{\partial v_h}{\partial n}\right\|_{0,E}^2\right)^{1/2} \,\left( \sum_{E \in \Gh} h_E \left\|\frac{\partial w_h}{\partial n}\right\|_{0,E}^2\right)^{1/2} \, .
\end{aligned}
\]
The a priori estimate now follows from the continuity of the bilinear form $\mathcal{B}$, the inverse estimate \eqref{inverse} and bound \eqref{wbound}, Lemma \ref{lem:lowresidual}, and from the triangle inequality.
\end{proof}

\section{A posteriori error analysis}

We will define  the local error estimators corresponding to the finite element solution $(u_h,\lambda_h)$ as
\begin{alignat*}{2}
    \eta_K^2 &= h_K^2 \| \Delta u_{h} + f \|_{0,K}^2, \qquad &K \in \Ch, \\
    \eta_{E,\Omega}^2 &= h_E \left \| \left\llbracket  \nabla u_h \cdot n \right\rrbracket \right\|_{0,E}^2, &E \in \Eh, \\
    \eta_{E,\Gamma}^2 &= h_E \left\| \lambda_h - \frac{\partial u_{h}}{\partial n} \right\|_{0,E}^2,  &E \in \Gh ,\label{localest} \\
    \eta_{E,\Gamma_N}^2 &= h_E \left\| \frac{\partial u_{h}}{\partial n} \right\|_{0,E}^2,  &E \in \mathcal{N}_h ,
\end{alignat*}
where $ \left\llbracket  \nabla u_h \cdot n \right\rrbracket $ denotes the  jump in  the normal  derivative across the inter-element boundaries.
  
We also define the global error estimators $\eta$ and $S$ through
\begin{align*}
    \eta^2 & = \sum_{K \in \Ch} \eta_K^2 + \sum_{E \in \Eh} \eta_{E,\Omega}^2 +  \sum_{E \in \Gh} \eta_{E,\Gamma}^2 + \sum_{E \in \mathcal{N}_h} \eta_{E,\Gamma_N}^2  ,  \\
    S  & = \|u_h^{-}\|_W + \sqrt{ (\lambda_h,u_h^{+})}\, .
\end{align*}
where $w_{+}=\max\{w,0\}$  denotes the positive and $w_{-}=\min\{w,0\}$ the negative part of $w$. We will now establish both the efficiency and the reliability of the error estimators.
\begin{theorem}[A posteriori estimate] It holds that
    \begin{equation}\label{reli}
\|u-u_h\|_V + \|\lambda-\lambda_h\|_Q\  \lesssim\,\eta + S
    \end{equation}
     and
      \begin{equation}\label{effi}
           \eta \  \lesssim \  \|u-u_h\|_V + \|\lambda-\lambda_h\|_Q  + \Bigg( \sum_{K \in \Ch} \osc_{K} (f)^2\Bigg)^{1/2}.
    \end{equation}
\end{theorem}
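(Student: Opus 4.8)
The plan is to prove the two bounds separately, treating reliability via the continuous stability result of Theorem~\ref{contstab} and efficiency via standard bubble-function arguments.

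For the reliability bound \eqref{reli}, I would start from Theorem~\ref{contstab} applied to the error pair $(v,\xi)=(u-u_h,\lambda-\lambda_h)$: there exists $w\in V$ with $\|w\|_1\lesssim\|u-u_h\|_V+\|\lambda-\lambda_h\|_Q$ and
\[
\big(\|u-u_h\|_V+\|\lambda-\lambda_h\|_Q\big)^2\lesssim \Bf(u-u_h,\lambda-\lambda_h;w,-(\lambda-\lambda_h)).
\]
Then I would expand the right-hand side, using the continuous problem \eqref{contprob} to replace the terms involving $(u,\lambda)$, which leaves a residual functional in $(u_h,\lambda_h)$ tested against $w$ and against $\mu-\lambda_h$ for a suitable $\mu\in\Lambda$, plus a sign term of the form $\langle u_h,\mu-\lambda_h\rangle$ and $\langle u,\lambda-\mu\rangle$. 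Choosing $\mu$ appropriately (e.g.\ to control $\langle u,\lambda-\mu\rangle$ and $\langle u_h,\mu-\lambda_h\rangle$) and using the sign conditions produces the consistency term $S=\|u_h^-\|_W+\sqrt{(\lambda_h,u_h^+)}$. The remaining volume and edge residuals are handled by integration by parts elementwise together with a Clément/Scott--Zhang quasi-interpolant $w_h\in V_h$ applied to $w$: the interior residual $\Delta u_h+f$ on each $K$ gives $\eta_K$, the inter-element normal-derivative jumps give $\eta_{E,\Omega}$, the Neumann mismatch on $\Gamma_N$ gives $\eta_{E,\Gamma_N}$, and the boundary term on $\Gamma$ together with the $\langle\cdot,\lambda-\lambda_h\rangle$ pairing gives $\eta_{E,\Gamma}$. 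For the dual-norm term $\|\lambda-\lambda_h\|_Q$, I would bound it via a supremum over $w\in W$, lift to $V$, and reduce it once more to the same residuals on $\Gamma$ (this is where Lemma~\ref{lem:lowresidual} and the scaling/trace arguments from \cite{GSV18} enter). I expect the main obstacle here to be the careful bookkeeping of the sign/complementarity terms, in particular isolating exactly $\|u_h^-\|_W$ and $\sqrt{(\lambda_h,u_h^+)}$ and no more; this is the point where the analysis differs from the unconstrained (Poisson) case and where the choice of the test multiplier $\mu\in\Lambda$ must be made precisely.

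For the efficiency bound \eqref{effi}, I would argue term by term in the standard Verfürth style. For $\eta_K$, multiply $\Delta u_h+f_h$ by an interior bubble supported in $K$, integrate by parts, use the equation $-\Delta u=f$, and invoke inverse estimates, picking up $\osc_K(f)$ from replacing $f$ by $f_h$. For $\eta_{E,\Omega}$ on interior edges, use an edge bubble extended to the two adjacent elements, integrate by parts, and bound by the local $H^1$-error plus neighbouring $\eta_K$ terms. For $\eta_{E,\Gamma_N}$, the Neumann condition $\partial u/\partial n=0$ on $\Gamma_N$ turns $\partial u_h/\partial n$ into a normal-trace error, bounded by the local $H^1$-error again via an edge bubble. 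For $\eta_{E,\Gamma}$, the quantity $\lambda_h-\partial u_h/\partial n$ equals $(\lambda-\lambda_h)$-type terms plus $(\partial u/\partial n-\partial u_h/\partial n)$, so it is controlled by $\|\lambda-\lambda_h\|_Q$ and the local $H^1$-error after a scaling argument; this is essentially Lemma~\ref{lem:lowresidual} read in the efficiency direction. Summing all local contributions and using finite overlap of patches yields \eqref{effi}. The efficiency part is routine; the only mildly delicate point is that the $\Gamma$-residual is measured against the dual norm $\|\lambda-\lambda_h\|_Q$ rather than a mesh-weighted $L^2$ norm, which is exactly what Lemma~\ref{lem:lowresidual} already furnishes.
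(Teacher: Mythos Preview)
Your overall strategy for reliability---continuous stability applied to the error pair, then residual expansion with a Cl\'ement interpolant---matches the paper's. However, you have one genuine omission and two unnecessary complications.

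The omission: when you insert the Cl\'ement interpolant $w_h$ (the paper writes $\tilde v$), you must invoke the \emph{discrete} problem \eqref{stabfem} to dispose of the part tested against $w_h$, and because the method is stabilised this does not give zero but leaves the extra term $\alpha\,\mathcal{S}_h(u_h,\lambda_h;-\tilde v,0)$. The paper bounds this via the inverse estimate \eqref{inverse} by
\[
\Big(\sum_{E\in\Gh}h_E\Big\|\lambda_h-\tfrac{\partial u_h}{\partial n}\Big\|_{0,E}^2\Big)^{1/2}\|\tilde v\|_V,
\]
which is absorbed into $\eta$. You never mention the stabilisation term, and without it the insertion of the Cl\'ement interpolant is unjustified; this is precisely the step where the a~posteriori analysis of the stabilised method differs from that of an unstabilised mixed scheme.

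The complications: first, there is no freedom in choosing $\mu\in\Lambda$. Since $\Lambda_h\subset\Lambda$, one simply tests \eqref{contprob} with $\mu=\lambda_h$, and the only complementarity term that survives is $\langle u_h,\lambda_h-\lambda\rangle$, which the paper bounds directly by $(\lambda_h,u_h^{+})+\|\lambda-\lambda_h\|_Q\,\|u_h^{-}\|_W$ using $\langle u_h^{+},\lambda\rangle\ge 0$. No term of the form $\langle u,\lambda-\mu\rangle$ appears. Second, your separate step of bounding $\|\lambda-\lambda_h\|_Q$ via a supremum and Lemma~\ref{lem:lowresidual} is not needed for reliability: the continuous stability already controls $\|u-u_h\|_V$ and $\|\lambda-\lambda_h\|_Q$ simultaneously. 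Lemma~\ref{lem:lowresidual} enters only on the efficiency side, to bound $\eta_{E,\Gamma}$, where your outline is correct and coincides with the paper's (terse) argument.
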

\begin{proof} The continuous stability condition (cf. Lemma \ref{contstab})  guarantees the existence of $v\in V$ such that 
\begin{equation}
\|v\|_V \lesssim   \|u-u_h\|_V +\|\lambda-\lambda_h\|_Q 
\label{vbound}
\end{equation}
and 
\[
\Big( \|u-u_h\|_V +\|\lambda-\lambda_h\|_Q\Big)^2 \lesssim \mathcal{B}(u-u_h,\lambda-\lambda_h;v, \lambda_h-\lambda) .
\]

On the other hand, testing in \eqref{stabfem} with $(-\tilde{v}, \lambda_h)$, where $\tilde{v}$ is the Cl\'ement interpolant of $v$, we obtain
\[
0\leq  - \mathcal{B}_h(u_h,\lambda_h,-\tilde{v},0)+\mathcal{L}(-\tilde{v}) \, .
\]
It follows that
\begin{align*}
    &\Big( \|u-u_h\|_V +\|\lambda- \lambda_h\|_Q\Big)^2  \\ &\lesssim  
\mathcal{B}(u,\lambda;v, \lambda_h-\lambda) - \mathcal{B}(u_h,\lambda_h;v, \lambda_h-\lambda) \\ &  \quad - \mathcal{B}(u_h,\lambda_h,-\tilde{v},0)  +\mathcal{L}(-\tilde{v}) +
\alpha\,\mathcal{S}_h(u_h,\lambda_h,-\tilde{v},0) \\
    \qquad \qquad &  \lesssim   \mathcal{L}(v-\tilde{v}) - \mathcal{B}(u_h,\lambda_h;v-\tilde{v}, \lambda_h-\lambda) +\alpha\,\mathcal{S}_h(u_h,\lambda_h,-\tilde{v},0) ,
\end{align*}
where we have used the bilinearity of $\mathcal{B}$ and the problem statement \eqref{contprob}. 

Observe  first that
\begin{equation}
\begin{array}{ll}
    \mathcal{L}(v-\tilde{v})  -    \mathcal{B}(u_h,\lambda_h;v-\tilde{v},   \lambda_h-\lambda) \vspace{2mm} \\  \qquad \qquad = (f,v-\tilde{v}) - (\nabla u_h, \nabla(v-\tilde{v})) + \langle v-\tilde{v}, \lambda_h\rangle +\langle u_h, \lambda_h-\lambda\rangle .
 \label{apostterms1}
 \end{array}
\end{equation}
After elementwise integration by parts, the first four terms on the right-hand side of \eqref{apostterms1} read  
\begin{align*}
    &\sum_{K\in \Ch} (\Delta u_h+f, v-\tilde{v})_K - \sum_{E\in \Eh}  ( \left\llbracket  \nabla u_h \cdot n \right\rrbracket , v-\tilde{v} )_E \\
    &\quad + \sum_{E\in \Gh} \left( \lambda_h-\frac{\partial u_h}{\partial n} ,  v-\tilde{v} \right)_E - \sum_{E\in \mathcal{N}_h} \left( \frac{\partial u_h}{\partial n} ,  v-\tilde{v} \right)_E   \, .
\end{align*}
The last term  in \eqref{apostterms1} can be estimated as follows
\begin{align*}
\langle u_h, \lambda_h-\lambda\rangle  \leq  (\lambda_h,u_h^{+}) +  \langle u_h^{-}, \lambda_h-\lambda \rangle  \leq  (\lambda_h,u_h^{+})  + \|\lambda-\lambda_h\|_Q \|u_h^{-}\|_W \, ,
\end{align*}
given that $\langle u_h^{+}, \lambda \rangle \geq 0$.  For the stabilisation term $ \mathcal{S}_h(u_h,\lambda_h,-\tilde{v},0) $, we obtain 
\begin{align*}
    \sum_{E\in \mathcal{G}_h} &h_E \left(\lambda_h -\frac{\partial u_h}{\partial n},\frac{\partial \tilde{v}}{\partial n}\right)_E \\
    & \leq 
 \left( \sum_{E \in \Gh} h_E \left\|\lambda_h -\frac{\partial u_h}{\partial n}\right\|_{0,E}^2\right)^{1/2} \,\left( \sum_{E \in \Gh} h_E \left\| \frac{\partial \tilde{v}}{\partial n}\right\|_{0,E}^2\right)^{1/2} \,  \\ 
 & \lesssim    \left( \sum_{E \in \Gh} h_E \left\|\lambda_h -\frac{\partial u_h}{\partial n}\right\|_{0,E}^2\right)^{1/2}  \Vert  \tilde{v} \Vert_{V} ,
\end{align*}
where in the last step we have used the discrete inverse estimate \eqref{inverse}.

For the Cl\'ement interpolant  $\tilde{v}$, it holds that
    \begin{equation}\label{cle}
        \Vert  \tilde{v} \Vert_{V}^2+ \sum_{K \in \Ch} h_K^{-2} \Vert v-\widetilde{v}\Vert _{0,K }^2+   \sum_{E \in \Eh \cup \Gh \cup \mathcal{N}_h} h_E^{-1} \Vert v-\tilde{v} \Vert_{0,E}^2  \ \lesssim   \Vert  v \Vert_{V}^2 .
    \end{equation}
The reliability estimate \eqref{reli} can now be established  using the Cauchy-Schwarz inequality together with \eqref{vbound} and \eqref{cle}. 

    The efficiency follows from standard  lower bounds \cite{Vbook} and from Lemma~\ref{inverse}.
\end{proof}

\section{Nitsche's method}

Nitsche's formulation can be elegantly derived from the stabilised method following the reasoning suggested in \cite{Rolf95}. 
 In fact, testing with $(0,\mu_h)$ in \eqref{stabfem}, yields 
 \[
 (u_h,\mu_h-\lambda_h) +\alpha\, \sum_{E\in \Gh} h_E \left( \lambda_h-\frac{\partial u_h}{\partial n} ,  \mu_h-\lambda_h \right)_E \geq  0  \qquad \forall \mu_h\in\Lambda_h \, .
 \]
 In particular,
 \[
 (u_h,\lambda_h) +\alpha\, \sum_{E\in \Gh} h_E \left( \lambda_h-\frac{\partial u_h}{\partial n} ,  \lambda_h \right)_E =  0\, .
 \]
We can thus write
\[
\sum_{E\in \Gamma_C}  (u_h,\mu_h)_E +\alpha\, \sum_{E\in \Gamma_C} h_E \left( \lambda_h-\frac{\partial u_h}{\partial n} ,  \mu_h \right)_E =  0  \qquad  \forall \mu_h\in\Lambda_h ,
 \]
 where  $\Gamma_C$ denotes the contact boundary. Locally, we obtain
 \begin{equation}
 \lambda_h|_{E} = \Pi_h \frac{\partial u_h}{\partial n}\Big|_E - (\alpha h_E)^{-1}  \Pi_h u_h\Big|_E  \qquad \forall E\in \Gamma_C , 
 \label{LMN}
 \end{equation}
 where $\Pi_h$ is the $L^2$ projection onto $\Lambda_h$. Assuming equal  polynomial order for both variables, say $k\geq 1$, but with the discrete Lagrange multiplier being discontinuous from element to  element, it follows that $\Pi_h=I$ and we have
 \[
 Q_h=\{ \mu_h\in Q : \mu_h\in P_k(E)~ \forall E\in \Gh\}  .
 \]
Therefore, using  the test function $(v_h,0)$  in \eqref{stabfem} and substituting \eqref{LMN}  into the resulting equation, leads to 
\begin{align*}
(\nabla  u_h,\nabla v_h) +  & \sum_{E\in \Gamma_C}  (\alpha h_E)^{-1} (u_h , v_h)_E -    \sum_{E\in \Gamma_C} \left\{  \left( \frac{\partial u_h}{\partial n},v_h\right)_E + \left(u_h,   \frac{\partial v_h}{\partial n}\right)_{E} \right\}    \\ 
& -\alpha \sum_{E\in \Gamma \setminus\Gamma_C}  h_E \left(  \frac{\partial u_h}{\partial n} ,  \frac{\partial v_h}{\partial n} \right)_E  = (f, v_h)  \qquad \forall v_h \in V_h . 
\end{align*}
 
 Defining an $L^2(\Gamma)$ function $\h$ through
 \[
 \h|_E = h_E \quad \forall E\in\Gh ,
 \]
 the discrete Lagrange multiplier can be written globally as 
\begin{equation}
    \label{lambda}
 \lambda_h =  \left(\frac{\partial u_h}{\partial n}\Big|_{\Gamma} -   (\alpha \h)^{-1} u_h|_\Gamma\right)_+ 
\end{equation}
and the discrete contact region becomes
\[
\Gamma^h_C= \{ (x,y)\in \Gamma : \lambda_h(x,y) > 0\}. 
\]
{\em Nitsche's method} now reads: find $u_h\in V_h$ and $\Gamma^h_C=\Gamma_C^h(u_h)$ such that 
\begin{equation}\begin{array}{l}
\displaystyle (\nabla  u_h,\nabla v_h)_\Omega +   \ \alpha^{-1} \big( \h^{-1} u_h , v_h\big)_{\Gamma_C^h }-     \left( \frac{\partial u_h}{\partial n},v_h\right)_{\Gamma_C^h } - \left(u_h,   \frac{\partial v_h}{\partial n}\right)_{\Gamma_C^h }   \vspace{2mm} \\ 
\displaystyle \qquad \qquad \qquad  -\alpha  \left(  \h \,  \frac{\partial u_h}{\partial n} ,  \frac{\partial v_h}{\partial n} \right)_{\Gamma \setminus\Gamma_C^h }  = (f, v_h)  \qquad \forall v_h \in V_h . 
\end{array}
\label{nitsche}
\end{equation}

\begin{remark}
Note that  formulation  \eqref{nitsche} is equivalent to the Nitsche's method introduced in \cite{CH13}, with $\gamma = \alpha \h $, and  to the symmetric variant 
$(\theta_1=-1)$ of the Nitsche's method proposed in \cite{CHR15}. 
\end{remark}

For the implementational aspects of the Nitsche's  method \eqref{nitsche}, we refer to \cite{GSV17a} where similar method was applied to the obstacle problem.

\section{Numerical verification}

We consider the problem~\eqref{nitsche} where the domain is given by $\Omega = (0,1)^2$, the loading is $f(x,y) = x \cos(2 \pi y)$, and the boundaries are
\begin{align*}
    \Gamma_D &= \{ (x,y) \in \mathbb{R}^2 : x = 0,~0 < y < 1 \}, \\
    \Gamma_N &= \{ (x,y) \in \mathbb{R}^2 : 0 < x < 1,~y = 0\} \cup \{ (x,y) \in \mathbb{R}^2 : 0 < x < 1,~y = 1\}, \\
    \Gamma &= \{ (x,y) \in \mathbb{R}^2 : x=1,~0 < y < 1 \}.
\end{align*}
The contact region is now a proper subset of $\Gamma$. There are two points on $\Gamma$
where the constraints change from binding to non-binding and the exact solution
belongs to $H^r$, $r<5/2$, see the comments in Remark~\ref{bindingremark}.
We employ the quadratic finite element space
\[
    V_h = \{ w \in V : w|_K \in P_2(K) ~\forall K \in \mathcal{C}_h \}.
\]
Thus, in view of the regularity of $u$, the convergence rate of the error
$\|u-u_h\|_V$~with uniform mesh refinement is limited to
$\mathcal{O}(N^{-3/4})$ where $N$~is the number of degrees of freedom.

To overcome the limited regularity, we consider also
a sequence of adaptive meshes. Starting with an initial mesh,
we compute the discrete solution $u_h$, the corresponding Lagrange multiplier through \eqref{lambda}, and the respective error indicator
\begin{align*}
    \mathcal{E}_K^2 &=  h_K^2 \| \Delta u_h + f \|_{0,K}^2 + h_K \| \llbracket \nabla u_h \cdot n \rrbracket \|_{0,\partial K}^2 \\
    &\quad+ h_K \left\|\lambda_h - \frac{\partial u_h}{\partial n} \right\|_{0,\partial K \cap \Gamma}^2 + h_K \left\|\frac{\partial u_h}{\partial n}\right\|_{\partial K \cap \Gamma_N}^2
\end{align*}
for every $K$ in the initial mesh. The mesh is then
improved by splitting the elements that have large error indicators.
This process is performed repeatedly until  a predetermined value of $N$ is reached.
Please refer to \cite{GSV18} or \cite{GSV18b} for more details on how to choose and split the
elements.

The discrete solution is visualised in Figure~\ref{fig:solution}.
The resulting sequence of adaptive meshes is given in Figure~\ref{fig:meshes}
and the convergence of the global error estimator $\eta+S$ is compared between the uniform and adaptive
mesh sequences in Figure~\ref{fig:convergence}. As expected, the adaptive meshes are
more refined near the singular points on the contact boundary. Moreover, the
observed convergence rate of the uniform refinement strategy is suboptimal, due to the limited regularity
of the exact solution, whereas the adaptive method successfully recovers the optimal
rate of convergence, $\mathcal{O}(N^{-1})$.

\begin{figure}[h!]
    \centering
    \includegraphics[width=0.9\textwidth]{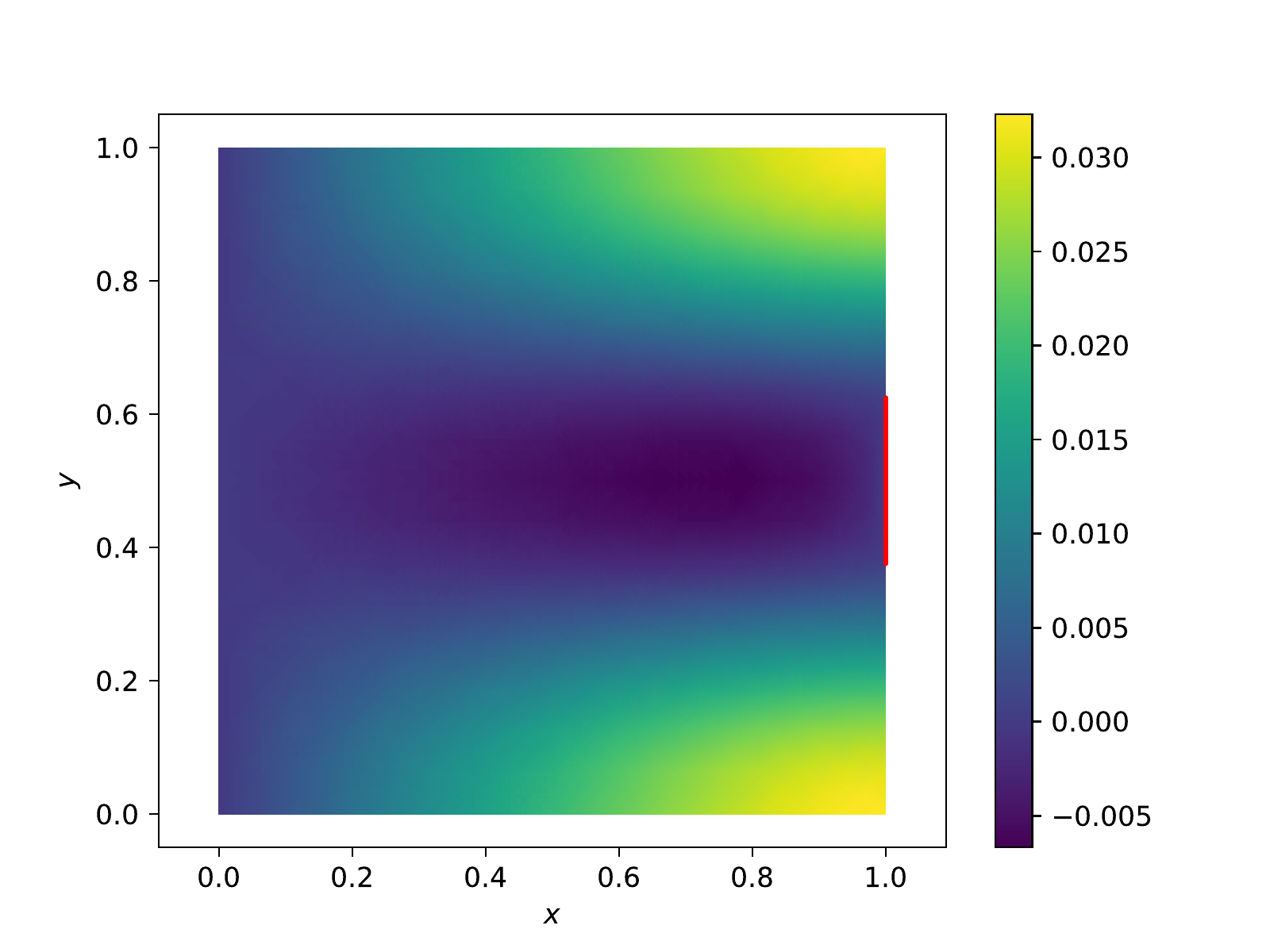}
    \caption{The discrete solution after 8 adaptive refinements. The contact
    region on the rightmost boundary  is highlighted in red.}
    \label{fig:solution}
\end{figure}

\begin{figure}[h!]
    \centering
    \includegraphics[width=0.23\textwidth]{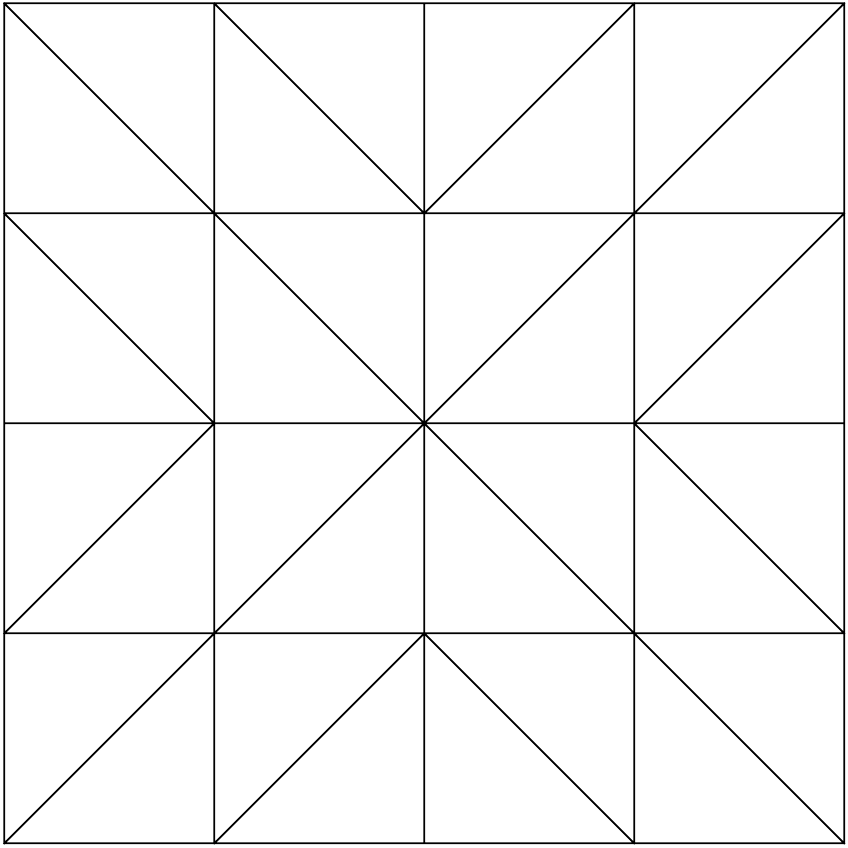}
    \includegraphics[width=0.23\textwidth]{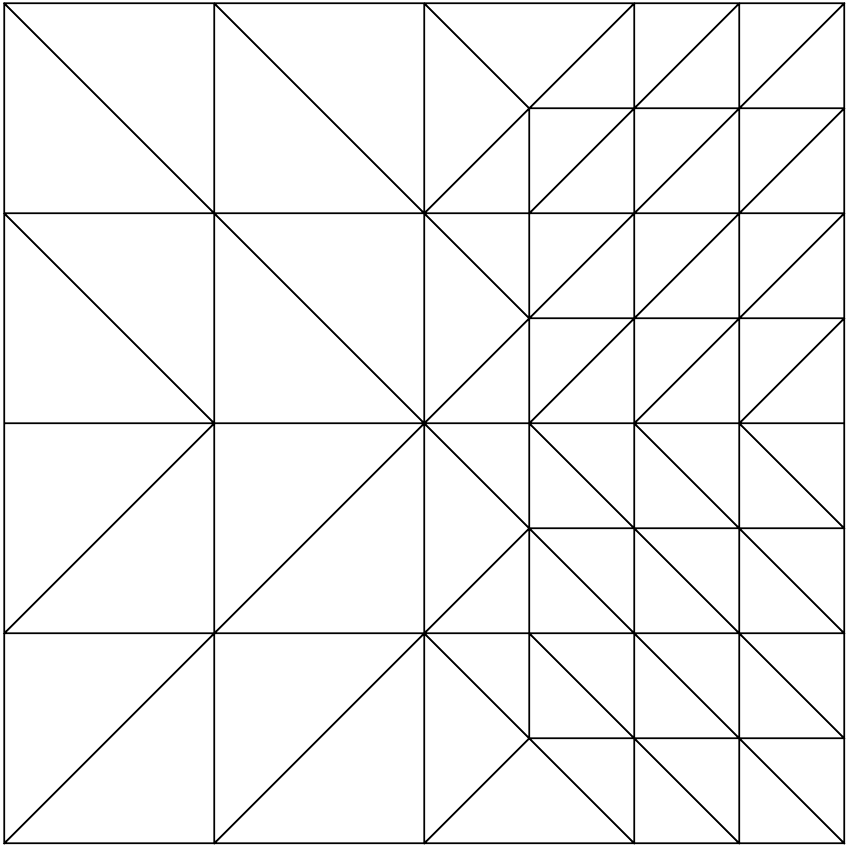}
    \includegraphics[width=0.23\textwidth]{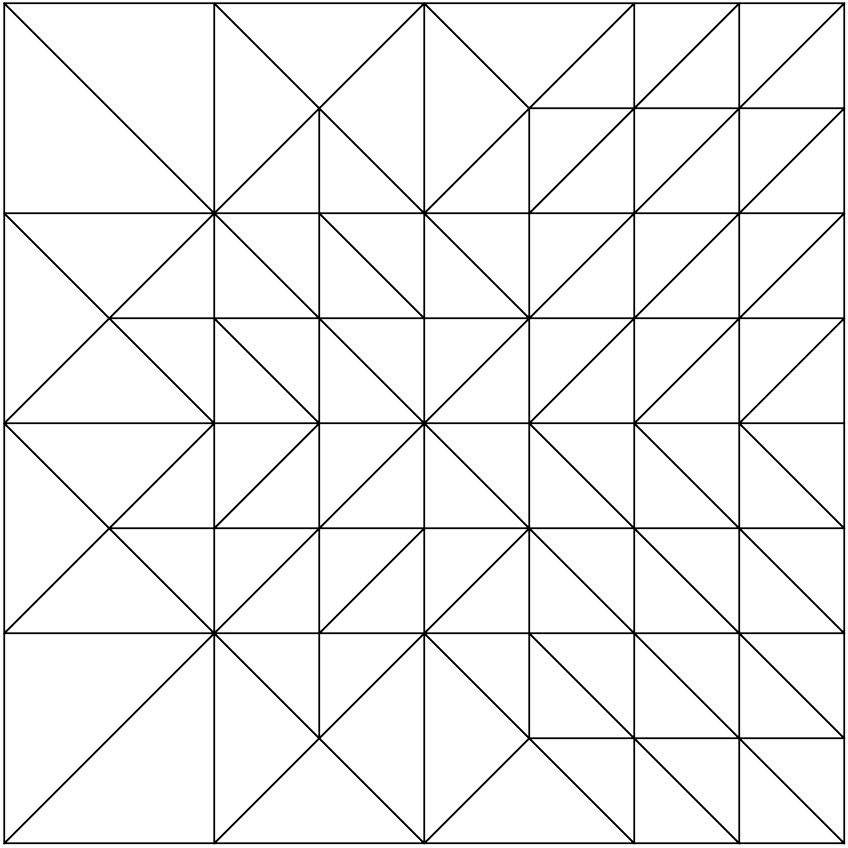}
    \includegraphics[width=0.23\textwidth]{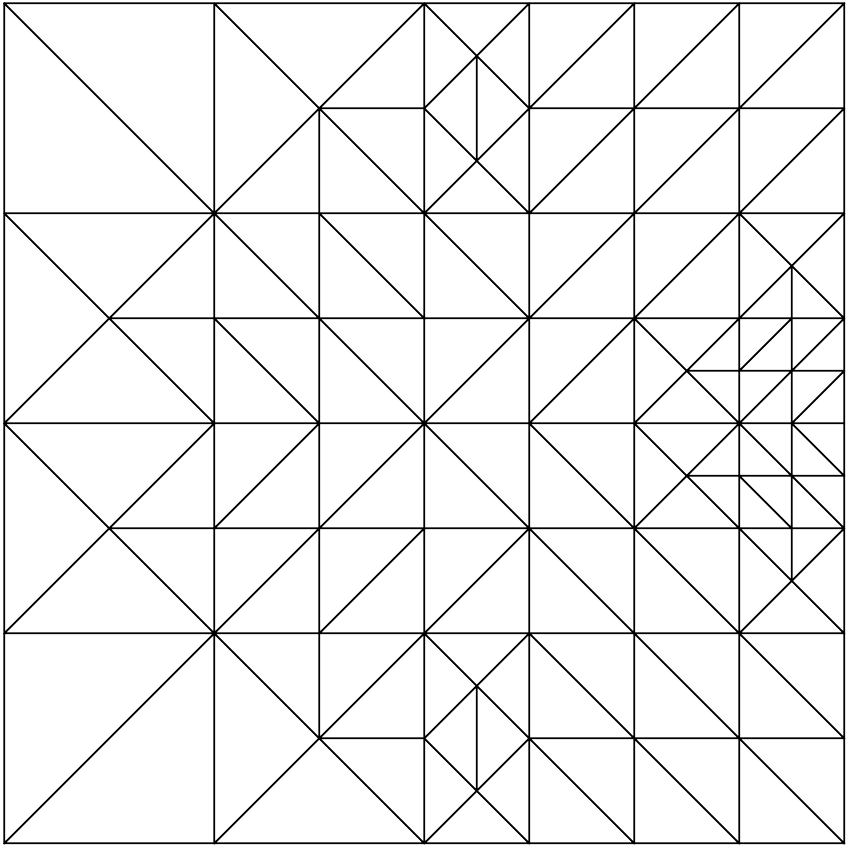}\\[0.1cm]
    \includegraphics[width=0.23\textwidth]{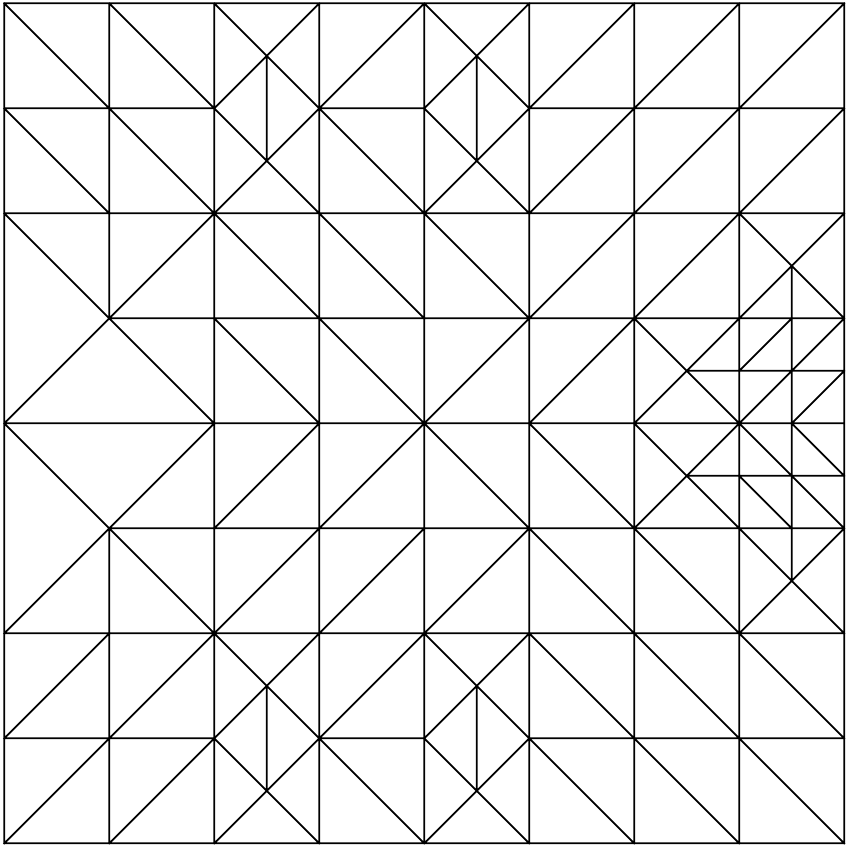}
    \includegraphics[width=0.23\textwidth]{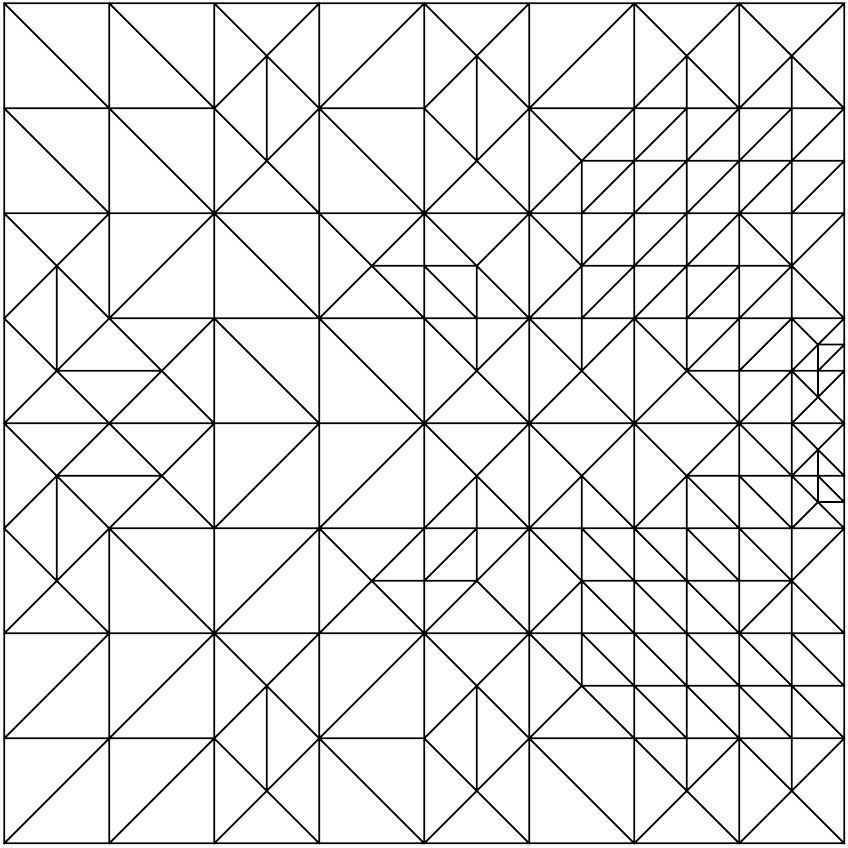}
    \includegraphics[width=0.23\textwidth]{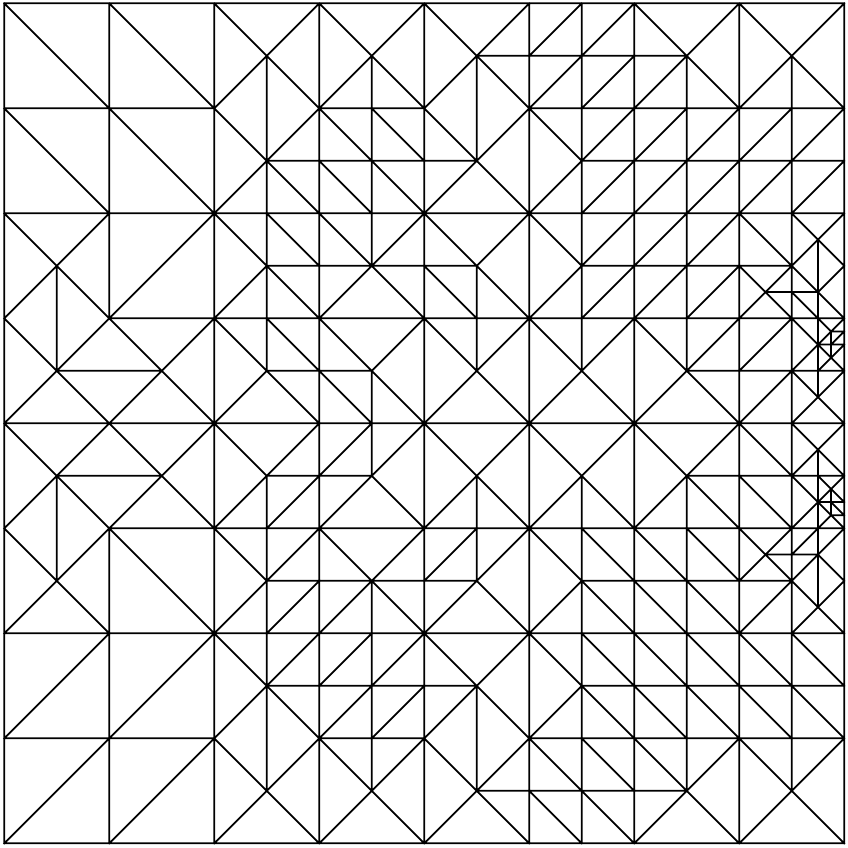}
    \includegraphics[width=0.23\textwidth]{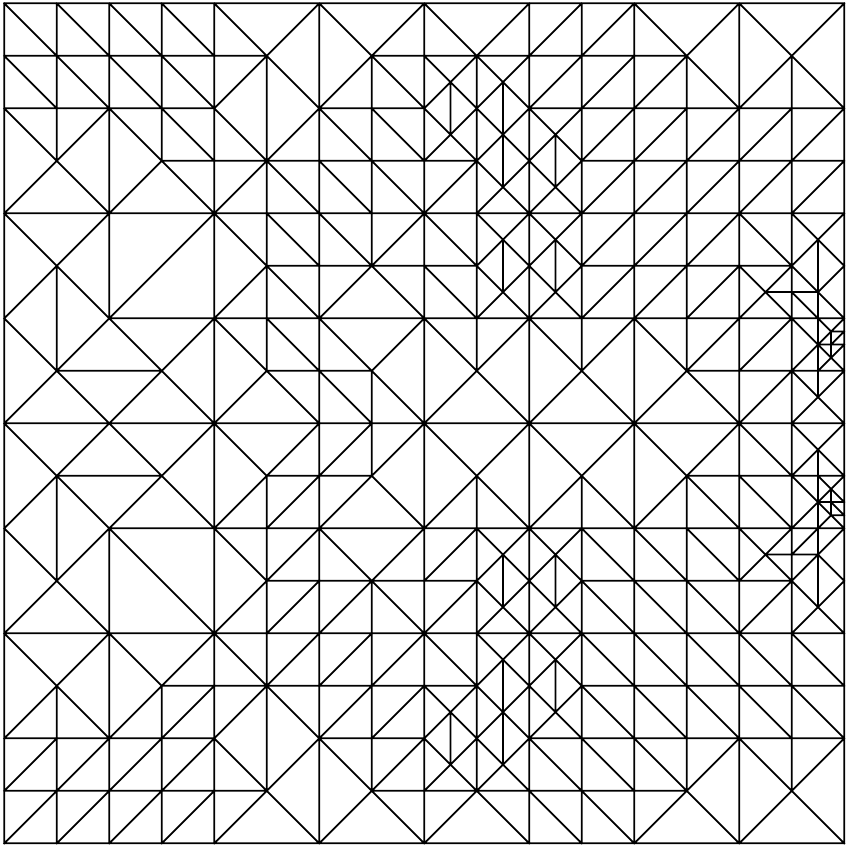}\\[0.1cm]
    \includegraphics[width=0.23\textwidth]{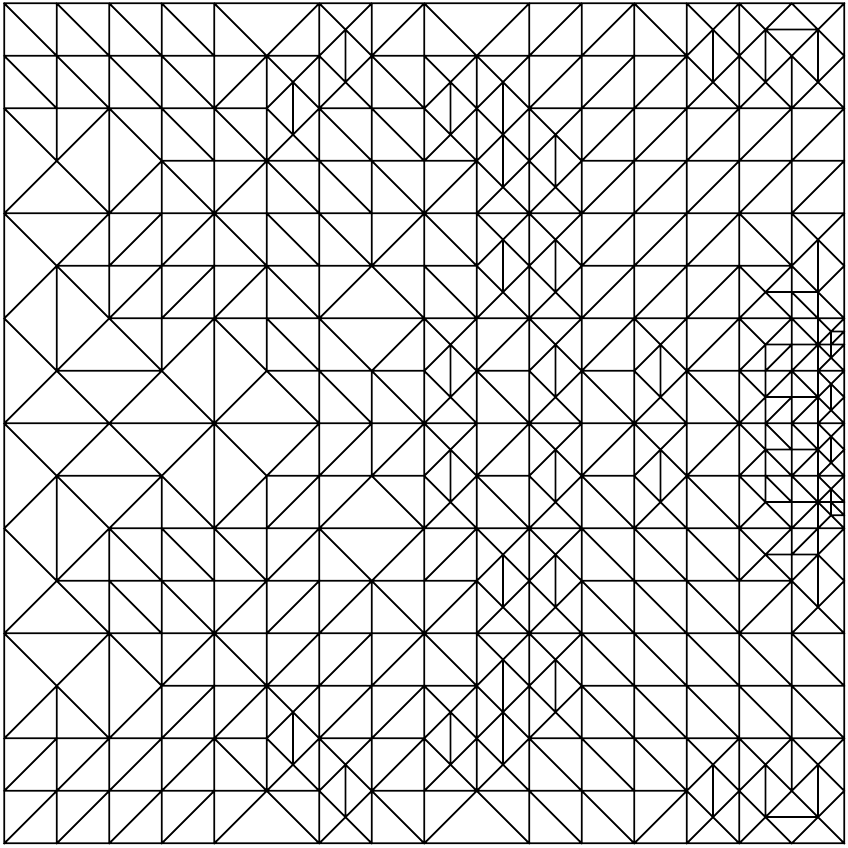}
    \includegraphics[width=0.23\textwidth]{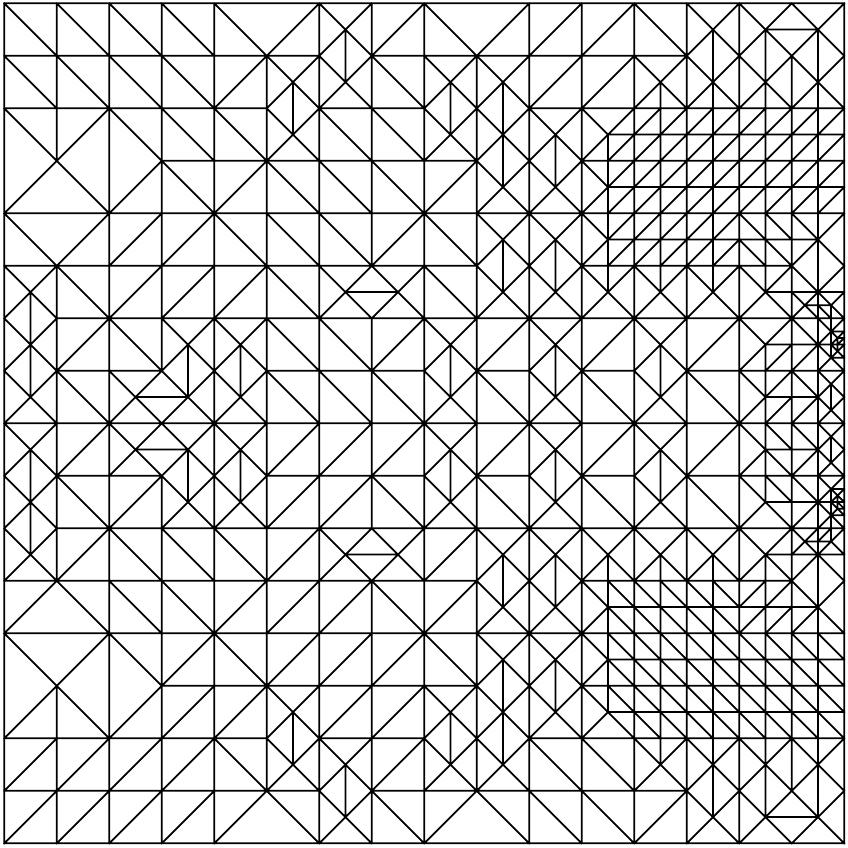}
    \includegraphics[width=0.23\textwidth]{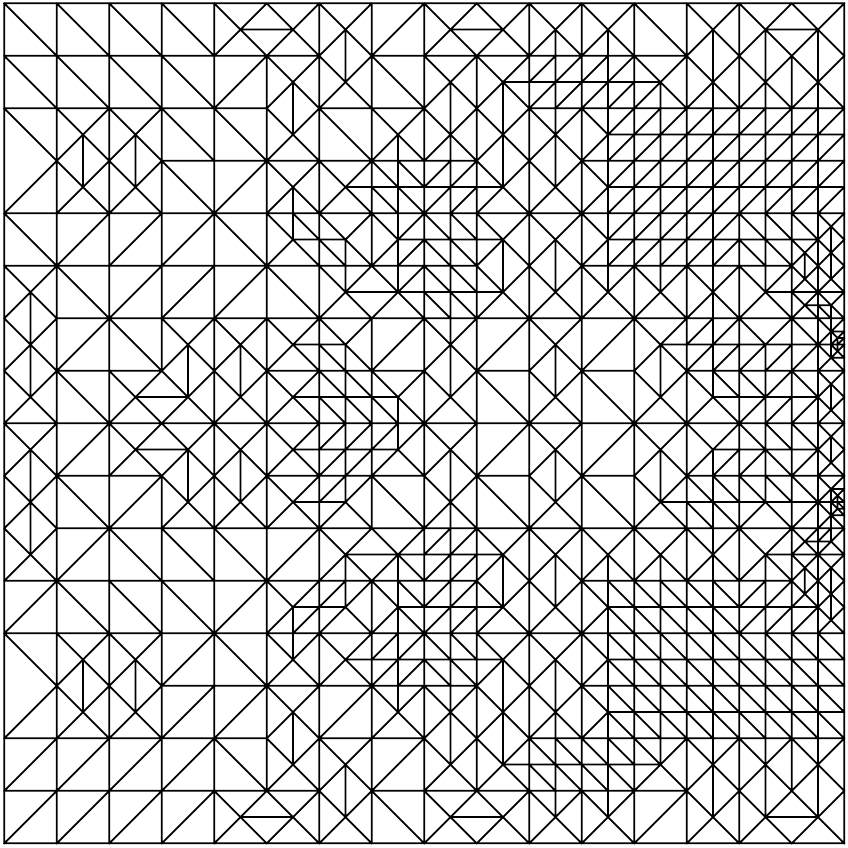}
    \includegraphics[width=0.23\textwidth]{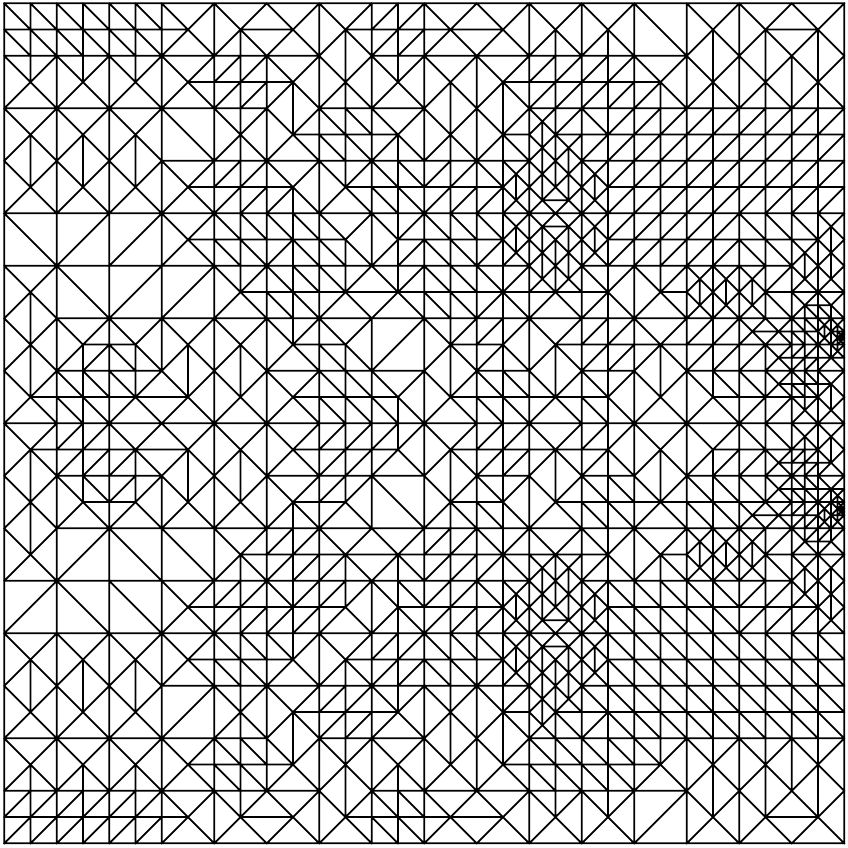}
    \caption{The initial mesh and the resulting sequence of adaptively
    refined meshes.}
    \label{fig:meshes}
\end{figure}

\begin{figure}[h!]
    \centering
    \includegraphics[width=\textwidth]{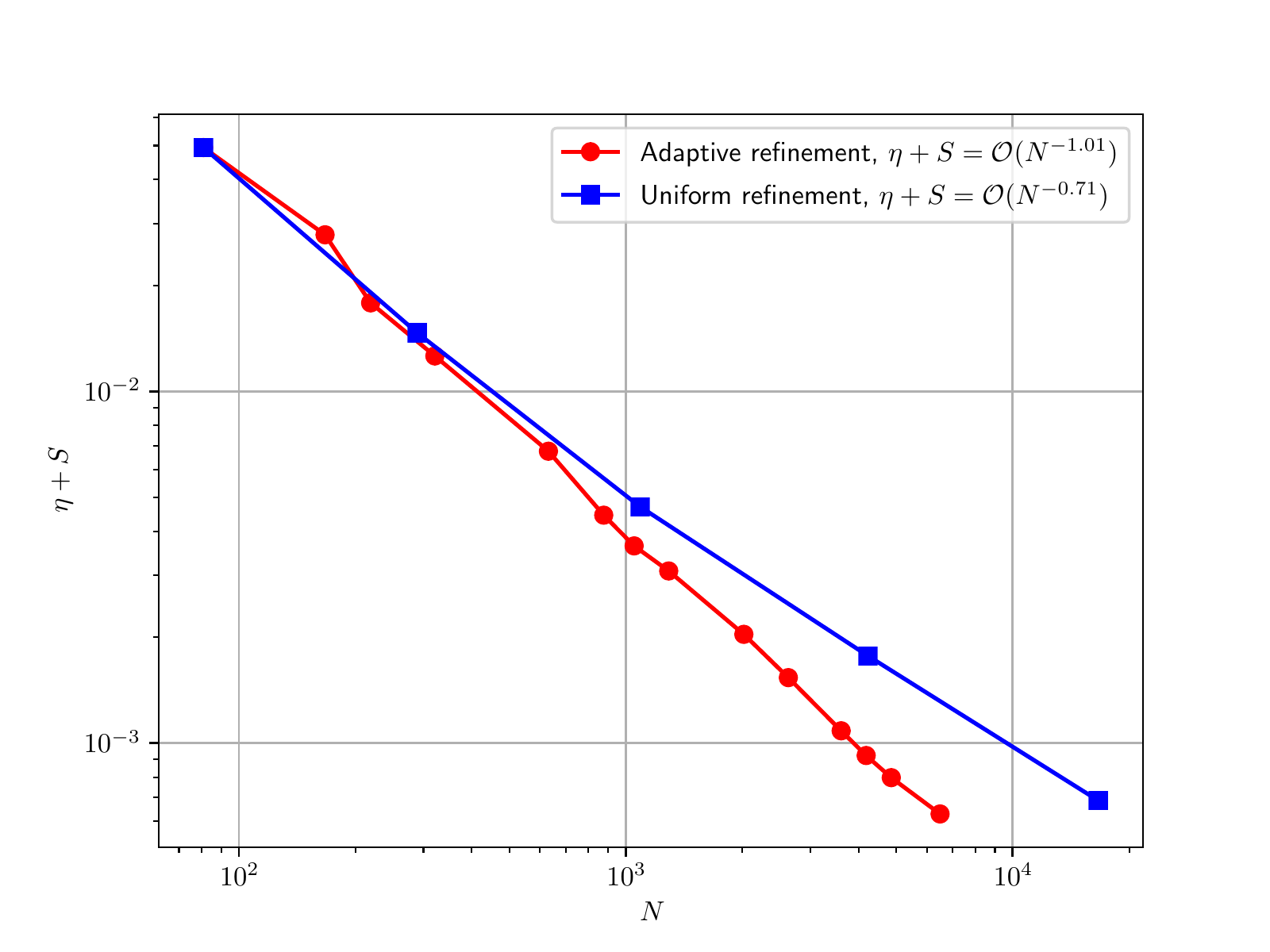}
    \caption{Convergence of the error estimator $\eta + S$
    as a function of the number of degrees of freedom $N$ for the
    uniform and the adaptive mesh sequences. The observed approximate
    convergence rates are given in the text box window.}
    \label{fig:convergence}
\end{figure}

\bibliographystyle{siam}
\bibliography{PS-ref}

\end{document}